\documentclass[12pt,leqno,twoside]{amsart}

\usepackage[margin=1.25in]{geometry}

\usepackage[normalem]{ulem}
\usepackage{amsmath,amscd,amssymb,amsfonts,latexsym,wasysym, mathrsfs, mathtools,hhline,color, tikz}

\usetikzlibrary{calc}

\usepackage[all, cmtip]{xy}
\usepackage{csquotes}
\usepackage{url}
\usepackage{comment}
\usepackage[utf8]{inputenc}

\definecolor{hot}{RGB}{65,105,225}

\usepackage[pagebackref=true,colorlinks=true, linkcolor=hot ,  citecolor=hot, urlcolor=hot]{hyperref}
\usepackage{ textcomp }
\usepackage{ tipa }
\usepackage{graphicx,enumerate}

\usepackage{geometry}
\geometry{left=2.7cm, right=2.7cm, bottom=1.3in, top=1.3in, includefoot}

\usepackage{chngcntr}

\theoremstyle{plain}
\newtheorem{theorem}{Theorem}[section]
\newtheorem{prop}[theorem]{Proposition}

\newtheorem{lm}[theorem]{Lemma}

\newtheorem{cor}[theorem]{Corollary}

\newtheorem{lemma}[theorem]{Lemma}
\newtheorem{thrm}[theorem]{Theorem}

\theoremstyle{definition}

\newtheorem{defn}[theorem]{Definition}

\newtheorem{rmk}[theorem]{Remark}

\newtheorem{ex}[theorem]{Example}
\newtheorem*{ex*}{Example}

\def\be{\begin{equation}}
\def\ee{\end{equation}}

\def\bt{\begin{thrm}}
\def\et{\end{thrm}}

\def\bc{\begin{cor}}
\def\ec{\end{cor}}

\def\br{\begin{rmk}}
\def\er{\end{rmk}}

\def\bp{\begin{prop}}
\def\ep{\end{prop}}

\def\bl{\begin{lm}}
\def\el{\end{lm}}

\def\bex{\begin{ex}}
\def\eex{\end{ex}}

\def\bd{\begin{defn}}
\def\ed{\end{defn}}

\newcommand{\CP}{\mathbb{CP}}

\newcommand{\C}{\mathbb{C}}
\newcommand{\Z}{\mathbb{Z}}

\newcommand{\K}{\mathbb{K}}

\newcommand{\sA}{\mathcal{A}}
\newcommand{\sB}{\mathcal{B}}
\newcommand{\cX}{\mathcal{X}}
\newcommand{\cE}{\mathcal{E}}

\newcommand{\sL}{\mathcal{L}}

\newcommand{\sP}{\mathcal{P}}
\newcommand{\sF}{\mathcal{F}}

\title[]{Maximal twisted Betti numbers of complex hyperplane arrangement complements}

\author{Yongqiang Liu}
\address{Y. Liu: The Institute of Geometry and Physics, University of Science and Technology of China, 96 Jinzhai Road, Hefei 230026 P.R. China} 
\email{liuyq@ustc.edu.cn}
\author{Laurentiu Maxim}
\address{L. Maxim: Department of Mathematics,  University of Wisconsin-Madison,  480 Lincoln Drive, Madison WI 53706-1388, USA,  \newline
{\text and} Institute of Mathematics of the Romanian Academy, P.O. Box 1-764, 70700 Bucharest, ROMANIA.}
\email {maxim@math.wisc.edu}
\author{Botong Wang}
\address{B. Wang: Department of Mathematics,         University of Wisconsin-Madison,  480 Lincoln Drive, Madison WI 53706-1388, USA.}
\email {wang@math.wisc.edu}

\date{\today}

\keywords{hyperplane arrangement, twisted betti number, local system}

\subjclass[2020]{32S22, 32S60, 52C35, 55N25.}

\begin{document}

\begin{abstract}
We show that the Betti numbers of a local system on the complement of an essential complex hyperplane arrangement are maximized precisely when the local system is constant. This result answers positively a recent question of Yoshinaga and the first author.
\end{abstract}

\maketitle

\section{Introduction}

Let $\sA=\{H_1,\ldots, H_d\}$ be an essential affine hyperplane arrangement in $\C^n$, with complement $U_\sA$. As shown independently by Dimca-Papadima \cite{DP} and Randell \cite{R}, the complement  $U_\sA$ has the homotopy type of a minimal CW complex, hence, in particular, its cohomology groups $H^i(U_\sA;\Z)$ are free abelian, for any integer $i \geq 0$. The Betti numbers $b_i(U_\sA)$ of $U_\sA$ are classically known, e.g., see \cite{OT}.

Let us now fix a coefficient field $\K$  and consider a $\K$-local system $\sL$ on $U_\sA$ of rank $r$. For $i \geq 0$ an integer, denote by 
\[
b_i(U_\sA;\sL):=\dim_\K H^i(U_\sA;\sL)
\]
the corresponding twisted Betti numbers of $U_\sA$. Then it follows by definition, together with the existence of a minimal CW structure on $U_\sA$, that (cf. also \cite{Coh98} for the case $\K=\C$)
\begin{equation}\label{ineq}
b_i(U_\sA;\sL) \leq r \cdot c_i(U_\sA)=r \cdot b_i(U_\sA),
\end{equation}
where $c_i$ denotes the number of $i$-cells in a minimal CW structure of $U_\sA$. Note that for CW complexes which do not admit a minimal structure (e.g., an arbitrary hypersurface arrangement complement) it is difficult in general to compare the leftmost and rightmost terms of \eqref{ineq}, as both are dominated by the middle term in \eqref{ineq}.

It is therefore natural to investigate for which local systems the above inequality \eqref{ineq}  becomes an equality for some $0 \leq i \leq n$. For rank-one local systems, this question was answered in full by Liu-Yoshinaga in the case when $\sA$ is a complexified real arrangement, see \cite[Theorem 1.1]{LY}.

The main result of this paper settles the question for all complex arrangements, in particular also providing a positive answer to Question 1.3 in \cite{LY}. We prove the following.
\begin{theorem}\label{thmain}
Let $\sA$ be an essential affine hyperplane arrangement in $\C^n$, with complement $U_\sA$. Let $\K$ be a field, and $\sL$ a nontrivial rank $r$ $\K$-local system on $U_\sA$ with twisted Betti numbers $b_i(U_\sA;\sL)$.  Then for any $0\leq i \leq n$ we have
\[
b_i(U_\sA;\sL)<r\cdot b_i(U_\sA).
\]
\end{theorem}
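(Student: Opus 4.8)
Inequality \eqref{ineq} being known, what is at stake is the strict drop of \emph{one} dimension for each $i$, and the natural framework is induction on $n$. We may assume $\sA$ essential: this is precisely what guarantees $b_i(U_\sA)>0$ for all $0\le i\le n$ (so the asserted inequality is not vacuous), and it is preserved under generic hyperplane sections. The two outermost degrees, and with them the base case $n=1$ (a wedge of $d$ circles), are immediate from the minimal cochain complex $(C^\bullet(U_\sA;\sL),d^\bullet_\sL)$: its zeroth differential is $v\longmapsto\big((\rho(\gamma_H)-\id)v\big)_{H\in\sA}$, where $\gamma_H$ runs over the meridians, so $d^0_\sL=0$ iff every $\rho(\gamma_H)=\id$ iff $\sL$ is trivial (the meridians generate $\pi_1(U_\sA)$); as $b_0(U_\sA;\sL)=\dim\ker d^0_\sL$ and $b_1(U_\sA;\sL)=r\,b_1(U_\sA)-\rank d^1_\sL-\rank d^0_\sL$, a nontrivial $\sL$ already forces strict inequality when $i=0,1$.

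For a \emph{middle} degree $2\le i\le n-1$, fix a generic hyperplane $V\cong\C^{n-1}$ and let $\sA\cap V$ be the induced (still essential) arrangement of $d$ hyperplanes in $V$. By the affine Lefschetz theorem with local-system coefficients, the restriction $H^i(U_\sA;\sL)\to H^i(U_{\sA\cap V};\sL|_V)$ is an isomorphism for $i\le n-2$ and injective for $i=n-1$, while $\pi_1(U_{\sA\cap V})\twoheadrightarrow\pi_1(U_\sA)$ keeps $\sL|_V$ nontrivial. Since a generic $V$ avoids the finitely many rank-$n$ flats of $\sA$, the intersection lattice $L(\sA\cap V)$ is the rank-$(n-1)$ truncation of $L(\sA)$, so $b_i(U_{\sA\cap V})=b_i(U_\sA)$ for $i\le n-1$. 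Invoking the inductive hypothesis in dimension $n-1$,
\[
b_i(U_\sA;\sL)\ \le\ b_i(U_{\sA\cap V};\sL|_V)\ <\ r\,b_i(U_{\sA\cap V})\ =\ r\,b_i(U_\sA),\qquad 2\le i\le n-1 .
\]

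The degree $i=n$ is the heart of the matter, and is where one must improve on the rank-one theorem of Liu--Yoshinaga. Since $U_\sA=U_{\bar\sA}$ depends only on the projectivized arrangement $\bar\sA\subset\CP^n$, one may re-choose the hyperplane at infinity. Suppose first this can be arranged so that some hyperplane $H\in\sA$ satisfies $\rho(\gamma_H)=\id$; then $\sL$ extends to a local system $\tilde\sL$ on $U_{\sA\setminus H}$, and the Gysin sequence of the codimension-one closed embedding $U_{\sA^H}\hookrightarrow U_{\sA\setminus H}$ with complement $U_\sA$ gives, in top degree,
\[
\cdots\to H^n(U_{\sA\setminus H};\tilde\sL)\to H^n(U_\sA;\sL)\to H^{n-1}(U_{\sA^H};\tilde\sL|)\to H^{n+1}(U_{\sA\setminus H};\tilde\sL)=0 ,
\]
whence $b_n(U_\sA;\sL)\le b_n(U_{\sA\setminus H};\tilde\sL)+b_{n-1}(U_{\sA^H};\tilde\sL|)$. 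The restriction $\sA^H$ is essential of rank $n-1$ and $\tilde\sL|_{U_{\sA^H}}$ is nontrivial, so $b_{n-1}(U_{\sA^H};\tilde\sL|)<r\,b_{n-1}(U_{\sA^H})$ by induction; combined with $b_n(U_{\sA\setminus H};\tilde\sL)\le r\,b_n(U_{\sA\setminus H})$ from \eqref{ineq} and the deletion--restriction identity $b_n(U_\sA)=b_n(U_{\sA\setminus H})+b_{n-1}(U_{\sA^H})$, this yields $b_n(U_\sA;\sL)<r\,b_n(U_\sA)$.

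\textbf{The main obstacle} is the ``totally twisted'' case, in which $\sL$ has nontrivial monodromy around every hyperplane of $\bar\sA$ in every de-coning, so that no hyperplane can be deleted. Already for abelian unipotent $\sL$ this reflects a scheme-theoretic, not merely set-theoretic, statement about the top characteristic variety near the trivial character, so the rank-one analysis does not suffice. I expect this to be handled by perverse-sheaf methods: write $H^n(U_\sA;\sL)=\mathbb{H}^0\big(\CP^n;R\bar j_*\sL[n]\big)$ with $\bar j\colon U_\sA\hookrightarrow\CP^n$ an affine open immersion; compare through a hyperplane $\mathbb{H}\subset\CP^n$ along which $\sL$ is untwisted and transverse, so that base change applies and $H^n(U_{\sA\cap\mathbb{H}};\sL|)=0$ by Artin vanishing on the affine $(n-1)$-dimensional $U_{\sA\cap\mathbb{H}}$; the remaining contributions are supported on the affine divisor $\bigcup\bar H_i$, hence built from arrangements in the $\bar H_i\cong\CP^{n-1}$ and amenable to the induction, while Verdier duality ties the intermediate-extension part back to the degree-zero estimate $b_0(U_\sA;\sL)<r$. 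Turning this package into the strict inequality is the crux.
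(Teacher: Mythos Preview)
Your reduction of the middle degrees to the top degree via generic hyperplane sections is fine and matches the paper's final paragraph. The difficulty, as you correctly identify, is the case $i=n$, and here your argument has two genuine gaps.

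\textbf{The deletion--restriction step is not sound as written.} You assert that if $\rho(\gamma_H)=\id$ then the restriction $\tilde\sL|_{U_{\sA^H}}$ is nontrivial, and this is what feeds the strict inequality into the Gysin estimate. But this claim is false in general. Take the central arrangement $\sA=\{x=0,\ y=0,\ x=y\}$ in $\C^2$ and a rank-$r$ local system with monodromies $A_1=\id$, $A_2=M$, $A_3=M^{-1}$ for some $M\neq\id$. Deleting $H=\{x=0\}$, the restricted arrangement $\sA^H$ is the single point $\{y=0\}\subset H\cong\C$ (both $\{y=0\}$ and $\{x=y\}$ trace the same hyperplane on $H$), and the meridian in $U_{\sA^H}$ maps to a loop encircling both remaining hyperplanes, with monodromy $A_2A_3=\id$. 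So $\tilde\sL|_{U_{\sA^H}}$ is trivial and your inductive strict inequality on the $\sA^H$ side evaporates; the Gysin bound then collapses to the non-strict \eqref{ineq}. One could try to repair this by a secondary induction on the number of hyperplanes (applying the theorem to $\sA\setminus H$ in the same dimension when $\tilde\sL|_{U_{\sA^H}}$ is trivial), but then one must also control essentiality of $\sA\setminus H$, and you have not set this up.

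\textbf{The totally twisted case is not proved.} You acknowledge this yourself: the perverse-sheaf sketch at the end is suggestive but does not constitute an argument, and you say ``turning this package into the strict inequality is the crux.'' It is, and the paper resolves it by a route quite different from what you outline. The paper first handles \emph{central} arrangements directly via the Hopf fibration $p\colon U_\sA\to M_\sA$: if the total-turn monodromy $T(\sL)$ is nontrivial then $R^1p_*\sL$ has rank $r'<r$ and a spectral-sequence count gives $b_n(U_\sA;\sL)\le r'\,b_{n-1}(M_\sA)=r'\,b_n(U_\sA)<r\,b_n(U_\sA)$; if $T(\sL)=\id$ then $\sL=p^{-1}\sL'$ and one reduces by K\"unneth to $M_\sA$ in dimension $n-1$. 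For a general (possibly non-central) arrangement the paper argues by contradiction: assuming $b_n(U_\sA;\sL)=r\,b_n(U_\sA)$, a nearby-cycle surjectivity lemma (Proposition~\ref{prop_surj}, proved with Artin vanishing and a careful control of singularities at infinity for the compactified linear map) forces the analogous equality on a small ball $B_x$ around any rank-$n$ flat; this ball is a central arrangement complement, and the central case then says $\sL|_{U_\sA\cap B_x}$ is trivial, contradicting the choice of $x$ on a hyperplane with $A_i\neq\id$. This local-to-global passage via perverse nearby cycles is precisely the missing ingredient in your sketch, and it handles the totally twisted case without any case split on the monodromy pattern.
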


The following consequence is immediate.
\begin{cor}
If the inequality \eqref{ineq}  is an equality for some $0 \leq i \leq n$, then it is an equality for all $i$ in this range, and this can only happen for the constant sheaf.
\end{cor}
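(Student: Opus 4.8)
We prove Theorem~\ref{thmain}; the Corollary then follows immediately, as the constant rank $r$ sheaf satisfies $b_i(U_\sA;\K^r)=r\,b_i(U_\sA)$ in every degree. Since cohomology with local coefficients is unaffected by extending the ground field, we may assume $\K$ is algebraically closed. Fix a minimal CW structure on $U_\sA$ and let $C_\bullet$ be the cellular chain complex of its universal cover: a complex of free $\K[\pi_1(U_\sA)]$-modules with $\operatorname{rank}C_i=b_i(U_\sA)$, whose differentials vanish after applying $-\otimes_{\K[\pi_1]}\K$. For a rank $r$ local system $\sL$, let $d_i(\sL)$ denote the $i$-th differential of $C_\bullet\otimes_{\K[\pi_1]}\sL$. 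Counting ranks, $b_i(U_\sA;\sL)=r\,b_i(U_\sA)$ holds if and only if $d_i(\sL)=0$ and $d_{i+1}(\sL)=0$; moreover $\operatorname{rank}d_i(\sL)=\sum_{j\ge i}(-1)^{j-i}\bigl(r\,b_j(U_\sA)-b_j(U_\sA;\sL)\bigr)$, so the vanishing of each $d_i(\sL)$ depends only on the twisted Betti numbers. Hence it suffices to show that $d_i(\sL)\neq 0$ for every $1\le i\le n$ when $\sL$ is nontrivial. As $d_1$ carries the $j$-th generator to $g_j-1$ for meridians $g_1,\dots,g_{|\sA|}$ generating $\pi_1(U_\sA)$, the condition $d_1(\sL)=0$ forces the monodromy to be trivial; this settles the degrees $i=0$ and $i=1$.

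The plan is then to induct on $n$. The case $n=1$ is elementary: $U_\sA$ is $\C$ minus finitely many points, and the claim follows from $b_0(U_\sA;\sL)<r$ together with the identity $\sum_i(-1)^i b_i(U_\sA;\sL)=r\sum_i(-1)^i b_i(U_\sA)$. For $n\ge 2$, pick a generic affine hyperplane $W\cong\C^{n-1}$; the induced arrangement $\sA_W$ on $W$ is essential of rank $n-1$. The Zariski--Lefschetz theorem gives a surjection $\pi_1(U_{\sA_W})\twoheadrightarrow\pi_1(U_\sA)$, so $\sL$ restricts to a nontrivial rank $r$ local system $\sL_W$ on $U_{\sA_W}$; by the Lefschetz hyperplane theorem for affine varieties with local coefficients, restriction $H^k(U_\sA;\sL)\to H^k(U_{\sA_W};\sL_W)$ is an isomorphism for $k<n-1$ and injective for $k=n-1$, and the same holds for the constant sheaf. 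Since also $b_k(U_\sA)=b_k(U_{\sA_W})$ for all $k\le n-1$ (for $k<n-1$ by the isomorphism, and for $k=n-1$ by the Brieskorn decomposition of the Orlik--Solomon algebra), applying the inductive hypothesis to $(U_{\sA_W},\sL_W)$ yields
\[
b_k(U_\sA;\sL)\ \le\ b_k(U_{\sA_W};\sL_W)\ <\ r\,b_k(U_{\sA_W})\ =\ r\,b_k(U_\sA)\qquad (0\le k\le n-1).
\]

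It remains to treat the top degree, $b_n(U_\sA;\sL)<r\,b_n(U_\sA)$, and here I would run a secondary induction on $|\sA|$ via deletion--restriction. Suppose $\sL$ has trivial monodromy around some $H\in\sA$. Then $\sL$ extends to a local system $\sL'$ on $U_{\sA\setminus H}$, necessarily nontrivial since it restricts to $\sL$; the Gysin sequence for the smooth divisor $U_{\sA^H}=U_{\sA\setminus H}\setminus U_\sA$ gives $b_n(U_\sA;\sL)\le b_n(U_{\sA\setminus H};\sL')+b_{n-1}(U_{\sA^H};\sL'')$, where $\sL''=\sL'|_{U_{\sA^H}}$. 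Now $b_n(U_{\sA\setminus H};\sL')<r\,b_n(U_{\sA\setminus H})$ by the secondary hypothesis if $\sA\setminus H$ is essential, and $b_n(U_{\sA\setminus H};\sL')=0$ if it is not; moreover $\sA^H$ is essential of rank $n-1$ (every hyperplane of an essential arrangement contains a $0$-flat), so $b_{n-1}(U_{\sA^H};\sL'')\le r\,b_{n-1}(U_{\sA^H})$, with strict inequality by the primary hypothesis when $\sL''$ is nontrivial. Together with $b_n(U_\sA)=b_n(U_{\sA\setminus H})+b_{n-1}(U_{\sA^H})$ these estimates close up, except in two residual situations: (a) $\sL$ has nontrivial monodromy around \emph{every} hyperplane of $\sA$; (b) whenever $\sL$ has trivial monodromy around $H$, the arrangement $\sA\setminus H$ is inessential and $\sL''$ is trivial --- which forces the hyperplanes with nontrivial monodromy to form a single parallel pencil, so that $\sL$ is pulled back along the projection $U_\sA\to\C\setminus\{\text{finitely many points}\}$ collapsing that pencil. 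In case (b) one computes $H^n(U_\sA;\sL)$ through the Leray spectral sequence of this projection and reduces to a one-dimensional problem. In case (a) one uses Poincar\'e duality to replace $b_n(U_\sA;\sL)$ by $\dim H^n_c(U_\sA;\sL)$ --- the lowest nonzero compactly supported cohomology, by Artin vanishing --- and localizes at the $0$-flats of $\sA$ to reduce to central essential arrangements, where passing to the deconing lowers the dimension.

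The main obstacle is exactly this top-degree statement, and within it case (a): ruling out maximal $b_n$ for a local system with nontrivial monodromy around every hyperplane. For such $\sL$ a generic hyperplane section is useless (it does not see degree $n$) and no deletion--restriction with coefficients extending across a hyperplane is available, so the argument must rest on the geometry of $U_\sA$ near its $0$-dimensional flats, equivalently on a sufficiently fine understanding of $H^n_c(U_\sA;\sL)$.
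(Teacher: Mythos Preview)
Your reduction to the top degree via a generic hyperplane section and induction on $n$ matches the paper's argument exactly. The gap is precisely where you locate it: the top-degree inequality $b_n(U_\sA;\sL)<r\,b_n(U_\sA)$, and specifically your case~(a). You identify the obstacle but do not resolve it; the sketch ``Poincar\'e-dualize, localize at the $0$-flats, decone'' is the right intuition, but the step ``localize at the $0$-flats'' hides the entire content. What is needed is a mechanism that transfers the assumed equality $b_n(U_\sA;\sL)=r\,b_n(U_\sA)$ down to the same equality for the \emph{central} arrangement $U_\sA\cap B_x$ at some $0$-flat $x$ lying on a hyperplane with nontrivial monodromy; only then does the deconing/Hopf-fibration argument reduce the dimension. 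Your deletion--restriction route does not supply this transfer, and case~(b) does not cover it either (indeed, the paper never performs the (a)/(b) split at all).

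The paper's resolution is as follows. From $b_n(U_\sA;\sL)=r\,b_n(U_\sA)$ one first deduces, via the $4$-term sequence for the pair $(U_\sA,U_\sA\cap H)$ with $H$ generic, that $H^{n-1}(U_\sA;\sL)\to H^{n-1}(U_\sA\cap H;\sL)$ is an isomorphism. The key new input is then a surjectivity statement: for $x$ a $0$-flat and $H$ a generic hyperplane close to $x$, the restriction
\[
H^{n-1}(U_\sA\cap H;\sL)\longrightarrow H^{n-1}(U_\sA\cap B_x\cap H;\sL)
\]
is onto. This is proved by applying Artin vanishing to the perverse sheaf ${}^{p}\psi_{f-f(x)}(Rj_*\sL[n])$ on $f^{-1}(f(x))\cong\C^{n-1}$, together with a nontrivial comparison between $H^*(f^{-1}(c);-)$ and global nearby cycles for the non-proper map $f$ (handled in the Appendix). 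Chasing the commutative square then forces $b_n(U_\sA\cap B_x;\sL)=r\,b_n(U_\sA\cap B_x)$, and now the central case (Hopf fibration to $M_\sA$, spectral sequence split according to whether the total turn monodromy is trivial) finishes by induction on $n$. This perverse-sheaf surjectivity is exactly the missing idea in your outline, and it is not visible from the CW/deletion--restriction side.
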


The proof of Theorem \ref{thmain} is by induction on the dimension $n$ of the ambient space, by first reducing to the case of central arrangements.

As a byproduct of the proof, we also get the following result.
\begin{prop} \label{prop surj} Let $\sA$ be an essential affine hyperplane arrangement in $\C^n$ with complement $U_\sA$. For a point $x$ in one of the hyperplanes of $\sA$, let $B_x\subset \C^n$ be a sufficiently small ball centered at $x$. Let $\sL$ be any finite rank $\K$-local system on $U_{\sA}$. 
Then the pullback homomorphism
\[
H^{n}(U_\sA; \sL)\to H^{n}(U_\sA\cap B_x ; \sL)
\]
is surjective. Moreover, we have the following surjective map 
\begin{equation} \label{epi 2}
  H^{n}(U_\sA; \sL)\twoheadrightarrow \bigoplus_{x\in L_0(\sA)} H^{n}(U_\sA\cap B_x ; \sL)  
\end{equation}
with $L_0(\sA)$ denoting the $0$-dimensional flats (or edges) of $\sA$.
\end{prop}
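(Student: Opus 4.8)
The plan is to translate the statement into one about the perverse sheaf $\sP:=Rj_{*}(\sL[n])$ on $\C^{n}$, where $j\colon U_{\sA}\hookrightarrow\C^{n}$ is the inclusion. Since $U_{\sA}$ is the complement of the hypersurface $H_{1}\cup\cdots\cup H_{d}$, the morphism $j$ is an affine open immersion, so $Rj_{*}$ preserves perverse sheaves; as $\sL[n]$ is perverse on $U_{\sA}$ (a local system placed in degree $-n=-\dim_{\C}U_{\sA}$), the complex $\sP$ is a perverse sheaf on $\C^{n}$. Using base change along the open immersion $B_{x}\cap U_{\sA}\hookrightarrow B_{x}$ together with the fact that stalks of constructible complexes are computed on sufficiently small balls, one obtains canonical identifications $H^{n}(U_{\sA};\sL)=\mathbb{H}^{0}(\C^{n};\sP)$ and $H^{n}(U_{\sA}\cap B_{x};\sL)=\mathcal{H}^{0}(\sP_{x})$, under which the pullback homomorphisms in the statement become the canonical restriction-to-stalk maps $\mathbb{H}^{0}(\C^{n};\sP)\to\mathcal{H}^{0}(\sP_{x})$. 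I would first dispose of the points $x$ lying on a hyperplane but not on any $0$-dimensional flat: there the localized arrangement has rank $<n$, so $H^{n}(U_{\sA}\cap B_{x};\sL)=0$ and nothing is to be proved. Hence it suffices to prove the single surjection \eqref{epi 2}, after which the first assertion follows for each $0$-flat by projecting onto the corresponding summand.

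The next step is to introduce the canonical morphism $\phi\colon\sP\to\mathcal{Q}$, where $\mathcal{Q}:=\bigoplus_{x\in L_{0}(\sA)}(i_{x})_{*}\mathcal{H}^{0}(\sP_{x})$, defined as the composition of the adjunction unit $\sP\to (i_{L_{0}})_{*}i_{L_{0}}^{*}\sP$ with the truncation map $(i_{L_{0}})_{*}i_{L_{0}}^{*}\sP\to (i_{L_{0}})_{*}\tau_{\geq 0}i_{L_{0}}^{*}\sP=\mathcal{Q}$; here $i_{x}$ and $i_{L_{0}}$ denote the inclusions of $x$ and of the finite set $L_{0}(\sA)$. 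By construction $\mathbb{H}^{0}(\phi)$ is exactly the map $\mathbb{H}^{0}(\C^{n};\sP)\to\bigoplus_{x}\mathcal{H}^{0}(\sP_{x})$ whose surjectivity we want. The key claim is that $\phi$ is an epimorphism in the abelian category $\mathrm{Perv}(\C^{n})$ — note that $\mathcal{Q}$ is a skyscraper, hence perverse — equivalently that $\mathrm{cone}(\phi)\in{}^{p}D^{\leq -1}(\C^{n})$. I would verify the latter stratum by stratum: over $\C^{n}\setminus L_{0}(\sA)$ the sheaf $\mathcal{Q}$ vanishes, so $\mathrm{cone}(\phi)$ restricts to $\sP[1]$, which lies in ${}^{p}D^{\leq -1}$ because $\sP$ is perverse; and at a point $x\in L_{0}(\sA)$ the map $i_{x}^{*}\phi$ is the canonical morphism $\sP_{x}\to\tau_{\geq 0}\sP_{x}=\mathcal{H}^{0}(\sP_{x})$ (note $\sP_{x}\in D^{\leq 0}$ by the support condition for $\sP$), whose cone is $(\tau_{\leq -1}\sP_{x})[1]\in D^{\leq -2}\subseteq D^{\leq -1}$. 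By the recollement (stratumwise) characterization of the perverse $t$-structure, these two local statements glue to give $\mathrm{cone}(\phi)\in{}^{p}D^{\leq -1}(\C^{n})$, proving the claim.

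Granting the claim, I would conclude as follows. Writing the resulting short exact sequence $0\to\mathcal{K}\to\sP\xrightarrow{\phi}\mathcal{Q}\to 0$ in $\mathrm{Perv}(\C^{n})$ and passing to the long exact hypercohomology sequence gives the exact stretch $\mathbb{H}^{0}(\C^{n};\sP)\to\mathbb{H}^{0}(\C^{n};\mathcal{Q})\to\mathbb{H}^{1}(\C^{n};\mathcal{K})$. Since $\C^{n}$ is affine and $\mathcal{K}$ is perverse, Artin vanishing yields $\mathbb{H}^{1}(\C^{n};\mathcal{K})=0$, so the map $\mathbb{H}^{0}(\C^{n};\sP)\to\mathbb{H}^{0}(\C^{n};\mathcal{Q})=\bigoplus_{x\in L_{0}(\sA)}\mathcal{H}^{0}(\sP_{x})$ is surjective; under the identifications above this is precisely \eqref{epi 2}. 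I expect the only genuinely delicate point to be the epimorphism claim for $\phi$, that is, the careful bookkeeping of support conditions and degree shifts in the computation of $\mathrm{cone}(\phi)$; the other ingredients — $t$-exactness of $Rj_{*}$ along affine open immersions, base change for the small balls $B_{x}$, and Artin vanishing for perverse sheaves on affine varieties — are all standard.
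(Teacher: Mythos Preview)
Your proof is correct and follows essentially the same approach as the paper: both set $\sP=Rj_{*}(\sL[n])$, use that $j$ is affine so $\sP$ is perverse, and reduce the surjectivity to Artin vanishing on $\C^{n}$. The paper's execution is slightly leaner --- it uses the attaching triangle $j_{x!}j_{x}^{-1}\sP\to\sP\to i_{x*}i_{x}^{-1}\sP$ and the observation that $j_{x!}j_{x}^{-1}\sP\in{}^{p}D^{\leq 0}$ (since $j_{x}^{-1}$ is $t$-exact and $j_{x!}$ is right $t$-exact) to get $H^{1}(\C^{n};j_{x!}j_{x}^{-1}\sP)=0$ directly, rather than first truncating to form $\mathcal{Q}$ and verifying that $\phi$ is an epimorphism in $\mathrm{Perv}(\C^{n})$ --- but the substance is the same.
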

\begin{rmk}
    When $\sL$ is the constant sheaf, (\ref{epi 2}) becomes an isomorphism, known as the Brieskorn decomposition. So (\ref{epi 2}) should be viewed as a generalization of Brieskorn decomposition to arbitrary local systems. 
\end{rmk}

\medskip

{\bf Acknowledgments.} This question was asked during the Workshop and Summer School on Hyperplane Arrangements, held at Tongji University in Shanghai, China, during Summer 2025. We thank the local organizers, Xiping Zhang and Fangzhou Jin, for providing us with a stimulating research environment and for excellent working conditions. We also thank J. Sch\"urmann and A. Parusi\'nski for useful discussions.
Y. Liu is supported by the Project of Stable Support for Youth Team in Basic Research Field, CAS (YSBR-001), NSFC grant No. 12571047 and  the starting grant from University of Science and Technology of China. 
L. Maxim 
acknowledges support from the Simons Foundation and from the project ``Singularities and Applications'' - CF 132/31.07.2023 funded by the European Union - NextGenerationEU - through Romania's National Recovery and Resilience Plan. 

\section{Preliminaries}
In this section we develop the tools necessary for proving our main result Theorem \ref{thmain} (see also the Appendix). For the constructible sheaf calculus, we use the notations from \cite{Dim04,MS}. 

\begin{lemma} \label{lemma 1}
Let $\sP$ be a $\K$-perverse sheaf on $\C^{n}$, and let $x$ be any point on $\C^{n}$ with inclusion map  $i_x: \{x\}\hookrightarrow \C^{n}$. Then the natural homomorphism 
\[
H^0(\C^{n}; \sP)\to H^0(x; i_x^{-1}\sP)
\]
is surjective. 
\end{lemma}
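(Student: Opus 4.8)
The plan is to identify the homomorphism in the statement with one of the maps in the long exact sequence of the standard ``open--closed'' distinguished triangle attached to the point $x$, and then to show that the relevant obstruction group vanishes by Artin--Grothendieck vanishing on the affine space $\C^n$.

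In detail, I would let $j\colon \C^n\setminus\{x\}\hookrightarrow \C^n$ be the inclusion of the open complement of $x$ and consider the attaching distinguished triangle
\[
j_!\, j^{-1}\sP \longrightarrow \sP \longrightarrow i_{x*}\, i_x^{-1}\sP \xrightarrow{+1}.
\]
Applying derived global sections $R\Gamma(\C^n;-)$ and using $R\Gamma(\C^n; i_{x*}i_x^{-1}\sP)=R\Gamma(x; i_x^{-1}\sP)$, one obtains an exact sequence
\[
H^0(\C^n;\sP)\longrightarrow H^0(x; i_x^{-1}\sP)\longrightarrow H^1(\C^n;\, j_!\, j^{-1}\sP),
\]
whose first arrow is exactly the natural homomorphism of the statement: it is induced by the adjunction morphism $\sP\to i_{x*}i_x^{-1}\sP$, whose effect on degree-zero hypercohomology of global sections is precisely restriction to the stalk at $x$. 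Hence it suffices to prove that $H^1(\C^n;\, j_!\, j^{-1}\sP)=0$.

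For the vanishing I would invoke two standard facts about the (middle) perverse $t$-structure: since $j$ is an open immersion, $j^{-1}$ is $t$-exact and $j_!$ is right $t$-exact; consequently $j^{-1}\sP$ is perverse on $\C^n\setminus\{x\}$ and $j_!\, j^{-1}\sP$ lies in ${}^{p}D^{\leq 0}(\C^n)$. Because $\C^n$ is a smooth affine variety of complex dimension $n$, the Artin--Grothendieck vanishing theorem --- in the form that taking global hypercohomology over an affine variety is right $t$-exact for the perverse $t$-structures (see, e.g., \cite{Dim04, MS}) --- yields $H^k(\C^n;\, j_!\, j^{-1}\sP)=0$ for every $k>0$, in particular for $k=1$, which finishes the argument. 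I do not expect a genuine obstacle here; the only points needing a little care are the identification of the connecting map with the natural restriction homomorphism (which is formal, by naturality of the adjunction) and the citation of the precise form of Artin vanishing. Note, moreover, that only the support half of perversity, namely $\sP\in{}^{p}D^{\leq 0}(\C^n)$, is actually used.
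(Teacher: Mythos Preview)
Your proof is correct and follows essentially the same approach as the paper: both use the attaching triangle for the open--closed decomposition $(\C^n\setminus\{x\},\{x\})$, observe that $j_!j^{-1}\sP\in{}^pD^{\leq 0}$ by right $t$-exactness of $j_!$, and conclude via Artin vanishing on the affine space $\C^n$. Your additional remark that only the condition $\sP\in{}^pD^{\leq 0}(\C^n)$ is needed is a nice observation not made explicit in the paper.
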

\begin{proof}
Let $j_x: \C^{n}\setminus \{x\}\hookrightarrow \C^{n}$ be the open embedding. Then we have a distinguished triangle
\[
j_{x!}j_x^{-1}\sP\to \sP\to i_{x*}i_x^{-1}\sP\xrightarrow{+1} .
\]
Since $j_x^{-1}$ is $t$-exact and $j_{x!}$ is right $t$-exact (e.g., see \cite[Theorem 5.2.4]{Dim04}), it follows that $j_{x!}j_x^{-1}\sP\in \,^p D_c^{\leq 0}(\C^{n},\K)$. Hence, by Artin vanishing (e.g., see \cite[Theorem 10.3.59]{MS}), we have that
\[
H^i(\C^{n}; j_{x!}j_x^{-1}\sP)=0
\]
for all $i>0$. Then by the long exact sequence associated to the above triangle, it follows that the homomorphism
\[
H^0(\C^{n}; \sP)\to H^0(\C^{n}; i_{x*}i_x^{-1}\sP)\cong H^0(x; i_x^{-1}\sP)
\]
is surjective. 
\end{proof}

 Lemma \ref{lemma 1} directly implies Proposition \ref{prop surj}.
\begin{proof}[Proof of Proposition \ref{prop surj}]
Since the inclusion map $j\colon U_\sA \hookrightarrow \C^n$ is a quasi-finite affine morphism, it follows that $Rj_* \sL[n]$ is a $\K$-perverse sheaf on $\C^n$ (e.g., see \cite[Corollary~5.2.17]{Dim04}).
  The first assertion in the statement then follows by applying Lemma \ref{lemma 1} to $Rj_* \sL[n]$. The second claim follows by applying Lemma \ref{lemma 1} to $Rj_* \sL[n]$ upon replacing $x$ by the finite set of points $L_0(\sA)$ in $\C^n$.
\end{proof}

On the other hand, Lemma \ref{lemma 1}  can be used to prove the following result, which is crucial for the proof of our main Theorem \ref{thmain}.

\begin{prop}\label{prop_surj} Let $\sA$ be an affine hyperplane arrangement in $\C^n$, and let $U_\sA=\C^n \setminus \sA$ be its complement. Let $\sL$ be any finite rank $\K$-local system on $U_{\sA}$. Let $x$ be a point on one of the hyperplanes of $\sA$, and let $B_x\subset \C^n$ be a sufficiently small ball centered at $x$. Let $f\colon \C^n\to \C$ be a general linear function, and let $c\in \C$ be a point different but sufficiently close to $f(x)$ (relative to the radius of $B_x$). 
Then the pullback homomorphism
\[
H^{n-1}(U_\sA\cap f^{-1}(c); \sL)\to H^{n-1}(U_\sA\cap B_x \cap  f^{-1}(c); \sL)
\]
is surjective. 
\end{prop}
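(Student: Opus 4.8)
The plan is to reduce the statement to an application of Lemma \ref{lemma 1}, in the spirit of the proof of Proposition \ref{prop surj}, but now working on the hyperplane $f^{-1}(c)$ rather than on all of $\C^n$. First I would set $V:=f^{-1}(c)\cong \C^{n-1}$. Since $f$ is general, the restriction $\sA\cap V$ is an affine hyperplane arrangement in $V$ with complement $U_\sA\cap V$, and the open inclusion $j_V\colon U_\sA\cap V\hookrightarrow V$ is again a quasi-finite affine morphism; hence by \cite[Corollary~5.2.17]{Dim04} the complex $Rj_{V*}(\sL|_{U_\sA\cap V})[n-1]$ is a $\K$-perverse sheaf $\sP$ on $V$. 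Applying Lemma \ref{lemma 1} to $\sP$ at a suitably chosen point $y\in V$ would give surjectivity of $H^{n-1}(U_\sA\cap V;\sL)\to H^{n-1}\big((U_\sA\cap V)\cap B_y;\sL\big)$ for a small ball $B_y$ around $y$ inside $V$.

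The content of the argument is then to choose $y$ so that this "small ball in $V$" statement matches the "$B_x\cap f^{-1}(c)$" statement we want. Here is where the hypotheses on $c$ enter: since $c$ is different from but sufficiently close to $f(x)$ relative to the radius of $B_x$, the slice $B_x\cap f^{-1}(c)$ is a nonempty ball in $V$ whose closure is contained in $B_x$. I would pick $y$ to be (a point near) the intersection of $V$ with the line through $x$ in the direction normal to $V$, or more simply the center of the disc $B_x\cap V$; then for $B_y$ a sufficiently small ball in $V$ around $y$ we have, on the one hand, that $B_y$ is small enough for Lemma \ref{lemma 1}'s conclusion to apply, and on the other hand, by a cofinality/conicity argument, that the restriction map $H^{n-1}\big((U_\sA\cap V)\cap B_y;\sL\big)\to H^{n-1}\big((U_\sA\cap V)\cap (B_x\cap f^{-1}(c));\sL\big)$ is an isomorphism — because both $B_y$ and $B_x\cap f^{-1}(c)$ are "sufficiently small" transverse slices and the pair $(U_\sA\cap V, B_x\cap f^{-1}(c))$ retracts onto the corresponding data near $y$. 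Composing, the pullback to $H^{n-1}(U_\sA\cap B_x\cap f^{-1}(c);\sL)$ factors as $H^{n-1}(U_\sA\cap V;\sL)\twoheadrightarrow H^{n-1}((U_\sA\cap V)\cap B_y;\sL)\xrightarrow{\ \sim\ }H^{n-1}(U_\sA\cap B_x\cap f^{-1}(c);\sL)$, giving the desired surjectivity.

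The main obstacle I anticipate is the bookkeeping in the second paragraph: making precise the claim that, for an appropriate choice of $y$ and of radii, the inclusion $B_x\cap f^{-1}(c)\hookrightarrow B_y$ induces an isomorphism on $\sL$-cohomology of the arrangement complements. This requires knowing that $B_x\cap f^{-1}(c)$ is itself a "sufficiently small ball" for the arrangement $\sA\cap V$ at the point $y$ — i.e., that it meets exactly the hyperplanes of $\sA\cap V$ passing through $y$, and each transversally, and that it is conically contractible onto $y$ compatibly with that arrangement. This is where the genericity of $f$ and the phrase "sufficiently close to $f(x)$ relative to the radius of $B_x$" do the work: genericity of $f$ guarantees $V$ meets the strata of $\sA$ transversally, so $\sA\cap V$ is again essential-type locally, and the closeness of $c$ to $f(x)$ forces $B_x\cap f^{-1}(c)$ to be a thin slice concentrated near $x$, hence near $y$. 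Once one fixes these choices (e.g. first choose $B_x$, then $y$ near $x\cap V$, then $c$ close enough that $B_x\cap f^{-1}(c)\subset B_y\subset B_x$ with all three adapted to the local arrangement), the local conic structure of hyperplane arrangements (\cite{OT}) yields the needed isomorphism, and the rest is the formal application of Lemma \ref{lemma 1} exactly as in the proof of Proposition \ref{prop surj}.
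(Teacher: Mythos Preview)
There is a genuine gap in your approach, and it lies precisely where you flagged the ``main obstacle''. The ball $B_x\cap f^{-1}(c)$ is \emph{not} a sufficiently small ball for the sliced arrangement $\sA\cap V$ at any single point $y\in V$. The reason is that when several hyperplanes $H_1,\dots,H_k$ of $\sA$ pass through $x$, their traces $H_i\cap V$ are hyperplanes in $V$ that in general do \emph{not} share a common point: the singular point $x$ gets ``spread out'' into a configuration of lower-dimensional flats inside $B_x\cap V$. For instance, with $n=2$, $\sA=\{x_1=0,\ x_2=0,\ x_1+x_2=0\}$, $x=0$, and $V=\{x_1+2x_2=c\}$, the three lines meet $V$ in three distinct points, all lying in $B_x\cap V$ for small $c$. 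Thus $U_\sA\cap B_x\cap V$ is a disc minus three points, whereas for any single $y\in V$ the local complement $U_\sA\cap V\cap B_y$ is a disc minus at most one point; no choice of $y$ makes your claimed isomorphism hold. The ``conicity'' you invoke fails because $B_x\cap V$, though conical as a ball, does not retract onto a neighborhood of any stratum of $\sA\cap V$.

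This is exactly why the paper does \emph{not} work on $V=f^{-1}(c)$ but instead passes to the perverse nearby cycle sheaf ${}^p\psi_{f-f(x)}(Rj_*\sL[n])$ on $f^{-1}(f(x))$. The point $x$ \emph{does} lie on $f^{-1}(f(x))$, and the stalk of the nearby cycle sheaf at $x$ is, by definition, the cohomology of the local Milnor fiber $U_\sA\cap B_x\cap f^{-1}(c)$; this is the identification \eqref{id1}. Lemma~\ref{lemma 1} is then applied on $f^{-1}(f(x))\cong\C^{n-1}$ at the single point $x$. The serious work, which your approach bypasses but cannot avoid, is the global identification \eqref{id2}: one must show that $H^0$ of the nearby cycle sheaf on $f^{-1}(f(x))$ agrees with $H^{n-1}(U_\sA\cap f^{-1}(c);\sL)$. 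This would be automatic if $f$ were proper, but since it is not, one has to rule out singularities at infinity for a general linear $f$; this is the content of the Appendix (Proposition~\ref{near} and Corollary~\ref{cor33}).
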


\begin{proof}
Let $j\colon U_\sA \hookrightarrow  \C^n$ be the open embedding, which is a quasi-finite affine morphism. Then, as already explained in the proof of Proposition \ref{prop surj}, the complex $\sF=Rj_*\sL[n]$ is a perverse sheaf on $\C^n$. Hence, the perverse nearby cycle complex $${^p}\psi_{f-f(x)}(\sF)=\psi_{f-f(x)}(\sF)[-1]$$ associated to $\sF$ is a perverse sheaf on $f^{-1}(f(x))$. 

Let us first note that by the construction of nearby cycles, we have natural isomorphisms 
\begin{equation}\label{id1}
H^{n-1}(U_\sA\cap B_x \cap  f^{-1}(c); \sL) \cong H^{-1}(B_x \cap f^{-1}(c); \sF) \cong H^0(x; i_x^{-1}{^p}\psi_{f-f(x)}(\sF)).
\end{equation}
Furthermore, there are natural isomorphisms
\begin{equation}\label{id2}
H^{n-1}(U_\sA\cap f^{-1}(c); \sL) \cong H^{-1}(f^{-1}(c); \sF) \cong H^{0}(f^{-1}(f(x)); {^p}\psi_{f-f(x)}(\sF)),
\end{equation}
where the first isomorphism in \eqref{id2} is immediate, while the second isomorphism will be proved in Proposition \ref{near} in the Appendix (the main difficulty here being the fact that $f$ is not a proper map).
The desired surjectivity in the statement follows from Lemma \ref{lemma 1}, by using \eqref{id1} and \eqref{id2}.
\end{proof}

The following result was proved by Cohen for rank one local systems (see \cite{Coh98}, and also \cite{Coh93}). 
\begin{prop}  \label{prop} Let $H$ be a generic hyperplane in $\C^n$. For $\sL$ a rank $r$ $\K$-local system on $U_\sA$, we have
    $$\dim_\K H^i (U_\sA, U_\sA\cap H; \sL)=\begin{cases}
       r\cdot b_n(U_\sA), & {\rm if } \ i=n,\\
       0, & {\rm otherwise}.
    \end{cases} $$
\end{prop}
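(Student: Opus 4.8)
The plan is to split the assertion into a vanishing statement, $H^i(U_\sA,U_\sA\cap H;\sL)=0$ for all $i\neq n$, and a dimension count in the single surviving degree, the latter to be deduced from an Euler characteristic computation once the vanishing is in hand. For the Euler characteristic I would use that the Euler characteristic of a rank $r$ local system equals $r$ times the topological Euler characteristic of the space, so that
\[
\chi\bigl(U_\sA,U_\sA\cap H;\sL\bigr)=r\bigl(\chi(U_\sA)-\chi(U_\sA\cap H)\bigr),
\]
together with the fact that, $H$ being generic, the operation $X\mapsto X\cap H$ identifies the flats of $\sA$ of rank $\leq n-1$ with all the flats of $\sA\cap H$, compatibly with the lattice order; hence the Orlik--Solomon algebras agree in degrees $\leq n-1$, so $b_i(U_\sA\cap H)=b_i(U_\sA)$ for $i\leq n-1$ and therefore $\chi(U_\sA)-\chi(U_\sA\cap H)=(-1)^{n}b_n(U_\sA)$. (Alternatively one may simply quote the rank--one case, due to Cohen, for this numerical identity.) Granting the vanishing, this yields $\dim_\K H^n(U_\sA,U_\sA\cap H;\sL)=(-1)^n\chi(U_\sA,U_\sA\cap H;\sL)=r\cdot b_n(U_\sA)$.

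The vanishing for $i>n$ is routine: $U_\sA$ and $U_\sA\cap H$ are smooth affine of dimensions $n$ and $n-1$, so Artin vanishing gives $H^i(U_\sA;\sL)=0$ for $i>n$ and $H^i(U_\sA\cap H;\sL)=0$ for $i>n-1$, and the long exact sequence of the pair forces $H^i(U_\sA,U_\sA\cap H;\sL)=0$ for $i>n$. The vanishing for $i<n$ is the real content, and it is exactly the affine Lefschetz hyperplane theorem with twisted coefficients: since $H$ is generic it is transverse to the stratification of $\C^n$ by the flats of $\sA$ (including at infinity), so the restriction map $H^i(U_\sA;\sL)\to H^i(U_\sA\cap H;\sL)$ is an isomorphism for $i<n-1$ and injective for $i=n-1$; inserting this into the long exact sequence of the pair gives $H^i(U_\sA,U_\sA\cap H;\sL)=0$ for $i\leq n-1$. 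I expect this Lefschetz input to be the main obstacle, being the one place where something beyond formal sheaf manipulations is needed.

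It is worth recording that this step also has a perverse--sheaf packaging consistent with the rest of the paper: writing $j\colon U_\sA\hookrightarrow\C^n$ and $b\colon\C^n\setminus H\hookrightarrow\C^n$ for the open inclusions, $\sF:=Rj_*\sL[n]$ is perverse, and a local computation at the points of $H$ --- where, by transversality of $H$ to the flats, the inclusion $U_\sA\cap B_x\cap H\hookrightarrow U_\sA\cap B_x$ is a homotopy equivalence for a small ball $B_x$ --- identifies $H^i(U_\sA,U_\sA\cap H;\sL)$ with $H^{i-n}(\C^n;b_!b^{-1}\sF)$; as $b$ is an affine open immersion, $b_!b^{-1}\sF$ is perverse, so Artin vanishing disposes of its positive degrees (the case $i>n$) and the generic--hyperplane Lefschetz vanishing for perverse sheaves on $\C^n$ disposes of its negative ones (the case $i<n$). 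An entirely homotopy--theoretic alternative, bypassing the Euler characteristic, would invoke the Dimca--Papadima / Randell description of $U_\sA$ as obtained, relative to the generic section $U_\sA\cap H$, by attaching exactly $b_n(U_\sA)$ cells of dimension $n$: the relative cellular cochain complex with coefficients in $\sL$ is then concentrated in degree $n$ and free of rank $r\cdot b_n(U_\sA)$, which proves both halves of the proposition at once.
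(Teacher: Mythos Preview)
Your main line of argument is correct and follows essentially the same route as the paper: low-degree vanishing via an affine Lefschetz hyperplane theorem (the paper phrases this as the $(n-1)$-equivalence of the inclusion $U_\sA\cap H\hookrightarrow U_\sA$ and then invokes \cite[Lemma~5.6]{PS19} to pass to twisted coefficients), high-degree vanishing from the affine/CW dimension bounds, and the surviving dimension read off from the Euler characteristic. The only cosmetic difference is in the Euler characteristic step: the paper writes $\chi(U_\sA)-\chi(U_\sA\cap H)=\chi(U_\sA\setminus H)$ and quotes \cite[Lemma~5]{DP} for $|\chi(U_\sA\setminus H)|=b_n(U_\sA)$, whereas you compare Betti numbers of $U_\sA$ and $U_\sA\cap H$ directly via the Orlik--Solomon algebra; these are equivalent. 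Your final alternative via the Dimca--Papadima/Randell relative cell structure is a legitimate and slightly more economical shortcut that the paper does not take.
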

\begin{proof}
Since $H$ is generic, 
by the Zariski Theorem of Lefschetz type (e.g., see \cite[Theorem~1.6.5]{Dim92}),
the inclusion map $U_\sA \cap H \hookrightarrow U_\sA $ is an $(n-1)$-equivalence.
Then by \cite[Lemma 5.6]{PS19}, 
the pullback homomorphism $ H^i(U_\sA; \sL)\to  H^i(U_\sA\cap H; \sL)$ is an isomorphism for $0\leq i\leq n-2 $ and is injective for $i=n-1$. 
By the long exact sequence of the cohomology of the pair $(U_\sA, U_\sA \cap H)$,
\[
\cdots \to H^i(U_\sA, U_\sA\cap H; \sL)\to H^i(U_\sA; \sL)\to H^i(U_\sA\cap H; \sL)\to \cdots
\]
we get that $ H^i (U_\sA, U_\sA\cap H; \sL)=0$ for $i\leq n-1$. On the other hand, since $U_\sA$ (resp., $U_\sA \cap H$) is affine,  it is homotopy equivalent to an $n$ (resp., $n-1$) dimensional CW complex. In particular, $H^i(U_\sA;\sL)=0$ for $i>n$ and $H^i(U_\sA\cap H;\sL)=0$ for $i>n-1$. By the above relative long exact sequence, we get that $ H^i (U_\sA, U_\sA\cap H; \sL)=0$ for $i>n$.
Hence $H^i(U_\sA, U_\sA\cap H;\sL)=0$ for $i\neq n$ and
\[
\begin{split}
\dim_\K H^{n}(U_\sA, U_\sA\cap H; \sL)& = \vert \chi(U_\sA, U_\sA\cap H; \sL) \vert =\vert \chi(U_\sA; \sL)-\chi(U_\sA\cap H; \sL) \vert \\ &=
r\cdot \vert \chi(U_\sA)-\chi(U_\sA\cap H) \vert=r\cdot \vert \chi(U_\sA\setminus H) \vert\\
&=r\cdot b_n(U_\sA),
\end{split}
\]
where the last equality is proved in \cite[Lemma 5]{DP}.
\end{proof}

\begin{cor} 
Let $H$ be a generic hyperplane in $\C^n$. For a rank $r$ $\K$-local system $\sL$ on $U_\sA$, if 
  $ \dim_\K H^n(U_\sA; \sL)=r\cdot b_n(U_\sA)$,
    then we have a natural isomorphism
    \begin{equation}\label{eq_44}
H^{n-1}(U_\sA; \sL)\cong H^{n-1}(U_\sA\cap H; \sL).
\end{equation}
\end{cor}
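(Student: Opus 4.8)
The plan is to read the isomorphism off the long exact sequence in cohomology of the pair $(U_\sA, U_\sA\cap H)$, with Proposition \ref{prop} supplying the only nontrivial input. First I would record the relevant vanishing and dimension facts. On one hand, Proposition \ref{prop} gives $H^i(U_\sA, U_\sA\cap H; \sL)=0$ for every $i\neq n$ — in particular for $i=n-1$ — together with $\dim_\K H^n(U_\sA, U_\sA\cap H; \sL)=r\cdot b_n(U_\sA)$. On the other hand, since $H$ is generic, $U_\sA\cap H$ is the complement of a hyperplane arrangement inside $H\cong\C^{n-1}$, hence an affine variety; it therefore has the homotopy type of a CW complex of dimension at most $n-1$, so $H^n(U_\sA\cap H; \sL)=0$ for every finite rank local system $\sL$. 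Moreover all cohomology groups in sight are finite-dimensional over $\K$ (by affineness, resp.\ by the finite minimal CW structure on $U_\sA$).

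Feeding these facts into the relative long exact sequence, the two outer terms $H^{n-1}(U_\sA, U_\sA\cap H; \sL)$ and $H^n(U_\sA\cap H; \sL)$ vanish, so it collapses to the four-term exact sequence
\[
0\to H^{n-1}(U_\sA; \sL)\to H^{n-1}(U_\sA\cap H; \sL)\xrightarrow{\ \partial\ } H^n(U_\sA, U_\sA\cap H; \sL)\to H^n(U_\sA; \sL)\to 0.
\]
Now the hypothesis $\dim_\K H^n(U_\sA; \sL)=r\cdot b_n(U_\sA)$, combined with $\dim_\K H^n(U_\sA, U_\sA\cap H; \sL)=r\cdot b_n(U_\sA)$, forces the last arrow $H^n(U_\sA, U_\sA\cap H; \sL)\to H^n(U_\sA; \sL)$ — which is surjective by exactness — to be an isomorphism of finite-dimensional $\K$-vector spaces; hence $\partial=0$. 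Consequently the pullback map $H^{n-1}(U_\sA; \sL)\to H^{n-1}(U_\sA\cap H; \sL)$ induced by the inclusion $U_\sA\cap H\hookrightarrow U_\sA$ is injective (the preceding term $H^{n-1}(U_\sA, U_\sA\cap H; \sL)$ vanishes) and surjective (since $\partial=0$), i.e.\ it is the desired natural isomorphism \eqref{eq_44}. Naturality is automatic, since every arrow used belongs to the natural long exact sequence of a pair.

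As for the main obstacle: there is none of substance. Once Proposition \ref{prop} is available, the argument is a purely formal dimension count in the long exact sequence. The only point needing a moment's care is the finite-dimensionality of the groups and the vanishing $H^n(U_\sA\cap H;\sL)=0$, which together ensure that the arrow with equal source and target dimensions is genuinely an isomorphism; both follow from the standard affineness/minimality considerations invoked above.
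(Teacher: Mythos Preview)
Your proof is correct and follows essentially the same approach as the paper: both reduce the long exact sequence of the pair $(U_\sA, U_\sA\cap H)$ to the same four-term exact sequence using Proposition~\ref{prop}, then use the dimension hypothesis to force the rightmost map to be an isomorphism, whence the leftmost map is too. The only difference is that you spell out the vanishing of the two outer terms and the finite-dimensionality explicitly, whereas the paper simply refers back to the argument in Proposition~\ref{prop}.
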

\begin{proof}
  The long exact sequence for the cohomology of the pair $(U_\sA, U_\sA \cap H)$ reduces as in 
Proposition \ref{prop} to a $4$-term exact sequence 
\begin{equation} \label{4 terms}
    0 \to H^{n-1}(U_\sA; \sL)\to H^{n-1}(U_\sA\cap H; \sL)\to  H^{n}(U_\sA, U_\sA\cap H; \sL)\to H^{n}(U_\sA; \sL) \to 0.
\end{equation}
By Proposition \ref{prop}, our assumption on dimension implies that the surjective homomorphism 
\[
H^{n}(U_\sA, U_\sA\cap H; \sL)\to H^{n}(U_\sA; \sL)
\]
is in fact an isomorphism. 
Then the claim follows by using the exact sequence \eqref{4 terms}.
\end{proof}

\section{Proof of the main result}

\begin{proof}[Proof of Theorem \ref{thmain}]
 We first give the proof of the theorem assuming $i=n$, by induction on $n$ in two steps.  If $n=1$, the claim is obvious. For induction, we assume that the claim holds for any essential hyperplane arrangement in $\C^{n-1}$.

\medskip

\noindent Step 1:   Assume that $\sA$ is a central arrangement in $\C^n$. We reduce the proof  to a lower dimension case, then use induction. Assume that the local system $\sL$ is defined by a homomorphism on $\pi_1(U_\sA)$ which sends the meridian corresponding to the hyperplane $H_i\in \sA$  to a matrix $A_i \in \mathrm{GL}_r(\K)$, for each $i=1,\ldots,d$. 

Let $M_\sA$ denote the  complement of the
corresponding projective arrangement in $\mathbb{CP}^{n -1}$, and consider the Hopf map $p\colon U_\sA \to M_\sA$. 
Then we have a spectral sequence 
with $E_2$-tem given by $E_2^{a,b}= H^a(M_\sA; R^b p_* \sL)$, which converges to $H^{a+b}(U_\sA;\sL)$. Since the fiber of the Hopf map $p$ is $\C^*$, we have that $E_2^{a,b}=0$ for $b\neq 0,1$. 
Following \cite[page 210]{Dim04}, we consider the total turn monodromy operator of the local system $\sL$ as an  invertible operator $T(\sL):\K^r \to \K^r$. This operator plays an important role in describing the local systems $R^b p_* \sL$, $b=0,1$.

If $T(\sL)\neq I_r$ is not the identity matrix, it follows from \cite[Proposition 6.4.3]{Dim04} that 
$R^1 p_* \sL$ is a local system of rank $0\leq r'<r$. 
Since $ M_\sA$ is an $(n-1)$-dimensional affine variety, it is homotopy equivalent to an $(n-1)$-dimensional CW complex. It follows that for the above spectral sequence we have that $E_2^{n-b,b}=0 $ unless $b=1$ and $E_2^{a,b}=0$ if $a+b>n$. Therefore,  
 $$E_2^{n-1,1}= E_\infty^{n-1,1}\cong H^n(U_\sA;\sL) .$$
Then we have $$\dim_\K H^n(U_\sA;\sL) = \dim_\K H^{n-1}(M_\sA; R^1 p_* \sL)\leq r'\cdot b_{n-1}(M_\sA)=r'\cdot b_{n}(U_\sA)<r\cdot b_{n}(U_\sA).$$  For the equality $b_{n-1}(M_\sA)=b_{n}(U_\sA)$, we apply the K\"unneth formula to the isomorphism $U_\sA\cong M_\sA \times \C^*$.

On the other hand,  
if $T(\sL)= I_r$ is the identity matrix, 
there exists a local system $\sL'$ of rank $r$ on $M_\sA$ (constructed as in \cite[Proposition 6.4.3]{Dim04}) such that $\sL=p^{-1} \sL'$. Moreover, by the K\"unneth formula we have the following isomorphism (note that $T(\sL)$ corresponds to the image of the representation defining $\sL$ on the factor $\C^*$)
\begin{center}
    $  H^{n-1}(M_\sA;\sL')\cong  H^{n}(U_\sA;\sL) $.
    \end{center}
Since $b_{n-1}(M_\sA)= b_{n}(U_\sA)$, and $M_\sA$ can be seen as a complement 
to an essential hyperplane arrangement in $\C^{n-1}$ (by setting one of the hyperplanes as the hyperplane at infinity in $\mathbb{CP}^{n -1}$),  the claim follows  by induction.

\medskip
 
\noindent Step 2: Next we prove the claim when the arrangement $\sA$ in $\C^n$ is not necessarily central by reducing to the central case with the same dimension for the ambient space.

Assume by contradiction that the inequality \eqref{ineq} is an equality for $i= n$. 
If $\sL$ is not the constant sheaf, without loss of generality, we assume that $A_1 \neq I_r$. Since $\sA$ is essential, there exists an intersection point $x\in L_0(\sA)$ contained in $H_1$. 
Let $B_x$ be a sufficiently small open ball in $\C^n$ centered at $x$. Consider a generic hyperplane $H$ sufficiently close to $x$ (relative to the radius of $B_x$). 
Consider the commutative diagram
\[
\xymatrix{
H^{n-1}(U_\sA; \sL) \ar[d]\ar@{->>}[r]&H^{n-1}(U_\sA\cap H; \sL) \ar@{->>}[d]\\
H^{n-1}(U_\sA\cap B_x; \sL)\ar[r]& H^{n-1}(U_\sA\cap B_x\cap H; \sL).
}
\]
The isomorphism \eqref{eq_44} implies that the top horizontal arrow is surjective. Moreover, Proposition \ref{prop_surj} implies that the  vertical arrow on the right is surjective. 
Therefore, the commutativity of the diagram implies that the bottom horizontal arrow must be surjective. 
Then by the relative long exact sequence, the surjectivity of the bottom horizontal map implies that
\begin{equation}\label{eq_iso1}
H^{n}(U_\sA\cap B_x, U_\sA\cap B_x\cap H; \sL)\to H^{n}(U_\sA\cap B_x; \sL)
\end{equation}
is an isomorphism. 
Since $U_\sA\cap B_x$ can be viewed as a complement of central hyperplane arrangement, by Proposition \ref{prop}, we have $$\dim_\K H^{n}(U_\sA\cap B_x; \sL) =\dim_\K H^{n}(U_\sA\cap B_x, U_\sA\cap B_x\cap H; \sL)=r\cdot b_n(U_\sA\cap B_x).$$ 
By Step 1 applied to the restriction of  $\sL$  over $U_\sA\cap B_x$, this restriction must be the constant sheaf, which contradicts the assumption that $A_1\neq I_r$.

\medskip

We conclude the proof by showing that it can be reduced to the case $i=n$, which was already proved above.
When $i=0$, the claim is obvious. So we assume that $i\geq 1$.  
Set $\sB=\sA \cap L$, where $L$ is a generic $i$-dimensional affine space in $\C^n$. Then $\sB$ is an affine arrangement in $L=\C^i$. By the Lefschetz hyperplane section theorem, we have 
 $\dim_\K H^i(U_\sA,\sL) \leq \dim_\K H^i(U_\sB,\sL)$.
Then we have $$  \dim_\K H^i(U_\sA,\sL) \leq \dim_\K H^i(U_\sB,\sL) < r\cdot b_i(U_\sB)=r\cdot b_i(U_\sA).$$
Here the last equality is  due to the genericity of $L$ and the minimality of CW structures of arrangements, whereas the middle inequality follows from applying the result proved above for $\sB$ as an arrangement in $\C^i$ and the fact that if $\sL$ is not the constant sheaf then also $\sL|_{U_\sB}$ is not constant. 
\end{proof}

\appendix
\section*{Appendix} 
\addcontentsline{toc}{section}{Appendix}

\setcounter{section}{1}
\renewcommand{\thesection}{A}

In this appendix, we include a proof of a result well known to experts (Proposition \ref{propA} and Corollary \ref{cor33}); see, for instance, \cite[Section 6.1]{Tib}. However, since we were unable to find it in the precise form required for this paper, we provide a complete proof here for the reader’s convenience. We also prove Proposition \ref{near}, which is used in the proof of Proposition \ref{prop_surj}.

Let $x_0, \dots, x_n$ be the homogeneous coordinates of $\CP^n$, and let $H_\infty=\{x_0=0\}\subset \CP^n$ be the hyperplane at infinity. Fix a Whitney stratification $\CP^n=\bigsqcup_{i\in I} W_i$ of $\CP^n$, which we denote by $\mathcal{W}$. Let $l=a_0x_0+a_1x_1+\cdots +a_nx_n$ be a linear form, with $a_i\neq 0$ for some $1\leq i\leq n$. Let $\cX$ be the blow-up of $\CP^n$ along the axis $V_l=\{l=x_0=0\}\subset \CP^n$, with exceptional divisor $\cE$. Then taking the ratio $l/x_0$ defines a proper map $\pi: \cX\to \CP^1$. Let $X=\pi^{-1}(\C)$ and $E=\cE\cap X$. Denote the restriction $\pi|_{X}: X\to \C$ by $\pi_X$. In other words, $\pi_X: X\to \C$ is the fiberwise projective compactification of the linear map $l/x_0: \C^n\to \C$, where $\C^n=\CP^n\setminus H_\infty$. We denote the composition $X\to \cX\to \CP^n$ of the blowup map and inclusion maps by $p$. 
\begin{prop}\label{propA}
Under the above notations, for a general linear form $l$, the following statements hold.
\begin{enumerate}
\item The pull-back of the Whitney stratification $\mathcal{W}$ of $\CP^n$ by $p$ defines a Whitney stratification of $X$, which we denote by $\mathcal{W}'$. 
\item The map $\pi_X\colon X\to \C$ does not have critical points along $E$, with respect to the stratification $\mathcal{W}'$. 
\end{enumerate}
\end{prop}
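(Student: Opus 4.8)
## Proof proposal for Proposition A.1

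The plan is to realize $X$ as a subvariety of $\CP^n \times \CP^1$ and analyze the incidence variety directly, using genericity of $l$ to guarantee the relevant transversality conditions. First I would write down explicit coordinates: $X \subset \C^n \times \C$ is the closure of the graph of $l/x_0$ restricted to $\{x_0 \neq 0\}$, fiberwise compactified; concretely, writing $t$ for the coordinate on $\C = \CP^1 \setminus \{\infty\}$, the map $p\colon X \to \CP^n$ is an isomorphism away from $E = p^{-1}(V_l)$, and over a neighborhood of $V_l$ the map $X \to \CP^n$ is the blow-up of the codimension-two smooth subvariety $V_l$. For part (1), since $p$ is an isomorphism over $\CP^n \setminus V_l$, the only issue is the behavior along $E$. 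Here I would invoke the standard fact that the total transform of a Whitney stratified set under blow-up along a \emph{general} smooth center is again Whitney stratified: one needs $V_l$ to be transverse to all strata $W_i$ of $\mathcal{W}$, which holds for general $l$ by a dimension count (the space of linear forms $l$ is finite-dimensional, and for each stratum $W_i$ the bad locus where $V_l = \{l = x_0 = 0\}$ fails to be transverse to $W_i$ is a proper subvariety of the parameter space). The refined stratification $\mathcal{W}'$ then has, as strata over $V_l$, the preimages $p^{-1}(W_i \cap V_l)$, which are $\CP^1$-bundles over $W_i \cap V_l$, together with the strata $p^{-1}(W_i) \setminus E$.

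For part (2), I would argue that $\pi_X = l/x_0$ (extended to the compactification) restricted to $E$ is, up to the identification of $E$ with the projectivized normal bundle, essentially the projection $E \to \CP^1$ coming from the second factor, and this is a submersion on each stratum of $\mathcal{W}'$ meeting $E$. More precisely: $E$ sits inside $\CP^n \times \CP^1$ as $V_l \times \CP^1$ (since $V_l$ has codimension two, its exceptional divisor over the blow-up is the projectivization of a rank-two normal bundle, but because we are blowing up an \emph{axis} and taking the ratio $l/x_0$, the exceptional divisor is canonically $V_l \times \CP^1$ with $\pi_X$ being projection to the second factor). Then the strata of $\mathcal{W}'$ contained in $E$ are of the form $(W_i \cap V_l) \times \CP^1$, and $\pi_X$ restricted to such a stratum is the projection to $\CP^1$, which is visibly a submersion — hence has no critical points. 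The content of genericity is only needed to ensure $W_i \cap V_l$ is smooth of the expected dimension (from part (1)), so that these products are genuine strata.

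The main obstacle I anticipate is making the identification $E \cong V_l \times \CP^1$ precise and checking that under it $\pi_X$ really is the second projection — in particular handling the locus over $H_\infty$ carefully, since the fiberwise projective compactification introduces the divisor at infinity $\pi_X^{-1}(\C)$ which interacts with $H_\infty$. One must verify that $E \cap X = E \setminus (\text{fiber over } \infty)$ is exactly the part where $\pi_X$ is finite-valued, and that the stratification restricted there still has the product structure. A secondary subtlety is the Whitney condition (b) along $E$ for the pairs of strata $(p^{-1}(W_j) \setminus E,\ p^{-1}(W_i \cap V_l) \times \CP^1$-type strata): this is where transversality of $V_l$ to $\mathcal{W}$ is genuinely used, and I would cite the standard result (e.g. as in Teissier, or \cite[Section 6.1]{Tib}) that blowing up a transverse smooth center preserves Whitney regularity rather than re-deriving it. Once these are in place, part (2) is essentially immediate from the product description of $E$.
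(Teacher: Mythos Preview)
Your argument for part~(2) is essentially the paper's: once $E$ is identified with $V_l\times\C$ inside $\CP^n\times\C$, the strata of $\mathcal{W}'$ lying in $E$ are products $(W_i\cap V_l)\times\C$ and $\pi_X$ restricts to the second projection, hence a submersion. That part is fine.

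Part~(1), however, has a genuine gap. Your key claim is that for general $l$ the center $V_l=\{x_0=l=0\}$ is transverse to every stratum $W_i$ of $\mathcal{W}$ in $\CP^n$, so that a standard ``blow-up along a transverse smooth center preserves Whitney regularity'' result applies. But this transversality is \emph{false} for every stratum $W_i\subset H_\infty$: both $V_l$ and $W_i$ lie in the hyperplane $H_\infty$, so $T_pV_l+T_pW_i\subset T_pH_\infty\subsetneq T_p\CP^n$ at any $p\in W_i\cap V_l$, no matter how general $l$ is. Consequently the total transform $p^{-1}(W_i)$ is not smooth in $\cX$ (it acquires two components meeting along $(W_i\cap V_l)\times\{\infty\}$), and the cited blow-up lemma does not apply. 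What saves the statement is precisely the restriction to $X=\pi^{-1}(\C)$, which discards the strict transform of $H_\infty$ and with it the singular locus; but establishing Whitney regularity of the remaining pieces along $E$ then requires an argument you have not supplied.

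The paper sidesteps this entirely with a cleaner device: it embeds $X$ in $\CP^n\times\C$ as the hypersurface $\{a_0x_0+\cdots+a_nx_n-tx_0=0\}$ and observes that this is a general member of the linear system spanned by the sections $x_0,\dots,x_n,tx_0$ of $\mathcal{O}_{\CP^n\times\C}(1)$. This linear system is basepoint-free on $\CP^n\times\C$ (though not on $\CP^n\times\CP^1$, which is why one must pass to $X$ rather than $\cX$), so Bertini gives transversality of $X$ to the product stratification $\mathcal{W}\times\C$ directly, and the restricted stratification is Whitney. This avoids any discussion of blow-up geometry or of the failure of transversality of $V_l$ inside $H_\infty$.
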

\begin{proof}
Notice that $[x_0:l]$ defines a rational map $\CP^n\dashrightarrow \CP^1$ and $\cX$ is equal to the closure of its graph in $\CP^n\times \CP^1$. Moreover, $X$ is equal to the intersection of $\cX$ with $\CP^n\times \C$. 

Taking cartesian product with $\C$, the Whitney stratification $\mathcal{W}$ of $\CP^n$ induces a Whitney stratification $\mathcal{W}_{\CP^n\times \C}$ on $\CP^n\times \C$. For the first statement, we will show that $X$ intersects every stratum of $\mathcal{W}_{\CP^n\times \C}$ transversally, which implies that the restriction of $\mathcal{W}_{\CP^n\times \C}$ to $X$ is a Whitney stratification. 

Let $t$ be the coordinate of $\C$. Then $X \subset \CP^n\times \C$ is defined by the equation $l=t \cdot x_0$, or
\[
a_0x_0+a_1x_1+\cdots +a_nx_n-t x_0=0
\]
for general $a_0, \dots, a_n$. Let $\mathcal{O}_{\CP^n\times \C}(1)$ be the pullback of the line bundle $\mathcal{O}_{\CP^n}(1)$ by the projection. Then, via pullback, $x_0, \dots, x_n$ define global sections of $\mathcal{O}_{\CP^n\times \C}(1)$. Also via pullback, $t$ is a global function of $\CP^n\times \C$, and hence the product $t x_0$ is also a global section of $\mathcal{O}_{\CP^n\times \C}(1)$. Notice that the global sections $x_0, \dots, x_n, tx_0$ define a base-point free linear system of $\mathcal{O}_{\CP^n\times \C}(1)$, and $X$ is  a general member of this linear system. Therefore, by the Bertini theorem for basepoint-free linear systems, the zero locus of a general section is transversal to the Whitney stratification $\mathcal{W}_{\CP^n\times \C}$. Hence the first statement follows. 

For the second statement, we need to show that for any $x\in \C$, $(\pi_X)^{-1}(x)$ intersects every stratum of $\mathcal{W}'$ transversally along $E$. Notice that 
\[
E= V_l\times \C\subset \CP^n\times \C.
\]
Let $\tilde{x}$ be any point in $(\pi_X)^{-1}(x)\cap E$. Then $\tilde{x}$ is contained in the line $p(\tilde{x})\times \C$ in $E$. Since $\mathcal{W}'$ is defined as the pullback of a stratification of $\CP^n$, the line $p(\tilde{x})\times \C$ is contained in one stratum of $\mathcal{W}'$. Since the line $p(\tilde{x})\times \C$ intersects $(\pi_X)^{-1}(x)$ transversally, the stratum of $\mathcal{W}'$ containing $\tilde{x}$ also intersects $(\pi_X)^{-1}(x)$ transversally. Therefore, the intersection of  $(\pi_X)^{-1}(x)$ and any stratum of $\mathcal{W}'$ is transversal along $E$. Thus, the second statement follows. 
\end{proof}
\begin{rmk}
It is essential to work with $\pi_X$ instead of $\pi$. In fact, if we replace $X, \pi_X$ and $\mathcal{W}'$ by $\cX, \pi$ and the pullback of $\mathcal{W}$ by the blow-up $\cX\to \CP^n$, then the first statement may fail. In fact, let $u, v$ be the homogeneous coordinate of $\CP^1$. If we apply the same arguments, then $\cX$ is a general member of the linear system defined by sections $ux_0, ux_1, \dots, ux_n, vx_0$ of the line bundle $\mathcal{O}_{\CP^n}(1)\boxtimes \mathcal{O}_{\CP^1}(1)$, which has base locus $\{u=x_0=0\}\subset \CP^n\times \CP^1$. When restricted to $\C=\{u\neq 0\}\subset \CP^1$, the base locus is removed. 
\end{rmk}

Let $\sF$ be an algebraic constructible complex of $\K$-vector spaces on $\C^n$. Let $f\colon \C^n\to \C$ be a general affine  function, and let $\widetilde{f}\colon X\to \C$ be the 
fiberwise compactification of $f$. Denote by $k:\C^n \hookrightarrow X$ the natural embedding.
\begin{cor}\label{cor33}
For any $c_0\in \C$, the supports of ${^p\varphi}_{\widetilde{f}-c_0}(Rk_*(\sF))$ and ${^p\varphi}_{\widetilde{f}-c_0}(k_!(\sF))$ consist of finitely many points, all of which are contained in $k(\C^n)\cong \C^n$. 
\end{cor}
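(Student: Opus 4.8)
The plan is to choose the Whitney stratification $\mathcal{W}$ of $\CP^{n}$ from Proposition \ref{propA} compatibly with $\sF$, transport it to $X$, and then bound the support of the perverse vanishing cycle complexes by the stratified critical locus of $\widetilde{f}$; the possible contributions of this locus along $E$ and along $\C^{n}$ will be eliminated, respectively, by Proposition \ref{propA}(2) and by the genericity of $f$. Concretely, first I would fix a Whitney stratification $\mathcal{W}$ of $\CP^{n}$ such that $H_\infty$ is a union of strata and $\sF$ is $\mathcal{W}|_{\C^{n}}$-constructible, where $\C^{n}=\CP^{n}\setminus H_\infty$ (such a $\mathcal{W}$ exists). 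For general $f$, Proposition \ref{propA}, applied to the corresponding linear form $l$, then produces the pulled-back stratification $\mathcal{W}'$ of $X$, which is Whitney (Proposition \ref{propA}(1)) and for which $\widetilde{f}=\pi_{X}$ has no stratified critical point along $E$ (Proposition \ref{propA}(2)). Since $p$ is an isomorphism over $\C^{n}$, the open set $k(\C^{n})=p^{-1}(\C^{n})\subseteq X$ is a union of $\mathcal{W}'$-strata, it is identified with $\C^{n}$ (matching $\mathcal{W}'$ with $\mathcal{W}|_{\C^{n}}$), and its complement is $E=p^{-1}(H_\infty)$, itself a union of $\mathcal{W}'$-strata because $H_\infty$ is a union of $\mathcal{W}$-strata.

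Next, writing $G$ for either $Rk_{*}(\sF)$ or $k_{!}(\sF)$: since $\sF$ is constructible with respect to $\mathcal{W}'|_{k(\C^{n})}$ and $k$ is the inclusion of an open union of strata, the local (stratified) topological triviality of Whitney stratifications shows that $G$ is constructible with respect to $\mathcal{W}'$ itself, so that $SS(G)\subseteq \bigcup_{S\in\mathcal{W}'}T^{*}_{S}X$. I would also record that $\widetilde{f}\colon X\to\C$ is a submersion, as it realizes $X$ as a $\CP^{n-1}$-bundle over $\C$, so $d\widetilde{f}$ is nowhere zero.

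Then the standard microlocal estimate for the support of vanishing cycles (cf.\ \cite{Dim04,MS}) gives that $\supp\big({^p\varphi}_{\widetilde{f}-c_0}(G)\big)$ is contained in $\{\,z\in\widetilde{f}^{-1}(c_0):d\widetilde{f}(z)\in SS(G)\,\}$; as $d\widetilde{f}$ never vanishes, any such $z$ is a stratified critical point of $\widetilde{f}$, i.e.\ $\widetilde{f}|_{S}$ fails to be submersive at $z$ for the $\mathcal{W}'$-stratum $S\ni z$. By Proposition \ref{propA}(2) this is impossible for $z\in E$, so the support lies in $k(\C^{n})\cong\C^{n}$, where $\widetilde{f}$ is identified with $f$. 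Finally, for general $f$ and each of the finitely many strata $S$ of $\mathcal{W}|_{\C^{n}}$, the critical locus of $f|_{S}$ is finite: the conormal variety $T^{*}_{S}\C^{n}\subseteq\C^{n}\times\check{\C^{n}}$ has dimension $n$, so its image in $\check{\C^{n}}$ has finite (possibly empty) fibers over a general point, and $df$ is such a point. Choosing $f$ in the nonempty Zariski-open set of admissible linear parts — finitely many conditions, each depending only on $\mathcal{W}$ and hence only on $\sF$ — one concludes that $\supp\big({^p\varphi}_{\widetilde{f}-c_0}(G)\big)$ is a finite subset of $k(\C^{n})$, as asserted.

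The step I expect to be most delicate is the claim that $Rk_{*}(\sF)$ is constructible with respect to the given pulled-back stratification $\mathcal{W}'$, rather than with respect to some a priori finer stratification — this is precisely what lets Proposition \ref{propA}(2) be applied to $G$, since on a finer stratification $\widetilde{f}$ could acquire spurious stratified critical points along $E$. The point to exploit is that $k$ embeds $\C^{n}$ as a union of strata with closed complement $E$, so that by the local product structure of a Whitney stratification the stalks of $Rk_{*}(\sF)$ along any stratum $S\subseteq E$ are computed by $\sF$-sections over a fixed model neighborhood and are therefore locally constant on $S$; for $k_{!}(\sF)$ the statement is immediate since it vanishes on $E$. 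The remaining ingredients — the microlocal support bound and the dimension count for critical loci of general linear functions — are routine.
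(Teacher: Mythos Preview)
Your proposal is correct and follows essentially the same route as the paper: choose $\mathcal{W}$ so that $\sF$ is $\mathcal{W}|_{\C^n}$-constructible, pull back to $\mathcal{W}'$ via Proposition~\ref{propA}, observe that $Rk_*(\sF)$ (resp.\ $k_!(\sF)$) is $\mathcal{W}'$-constructible, bound the vanishing-cycle support by the stratified critical locus of $\widetilde f$, use Proposition~\ref{propA}(2) to exclude points of $E$, and invoke genericity of $f$ for finiteness inside $\C^n$. The paper's write-up is terser---it simply asserts the $\mathcal{W}'$-constructibility of $Rk_*(\sF)$ and the finiteness of the critical locus, and notes that the $k_!$ case also follows by Verdier duality---but the underlying argument is the same as yours.
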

\begin{proof}
First, notice that if $l$ is the homogenization of $f$, then $X$ is the same variety as defined at the beginning of the appendix, and $\widetilde{f}=\pi_X$. 
Considering $\CP^n=\C^n\cup H_\infty$, let $\mathcal{W}$ be a Whitney stratification of $\CP^n$ such that $\C^n$ is a union of strata and $\sF$ is constructible with respect to the induced stratification of $\C^n$. Let $\mathcal{W}'$ be defined as in Proposition \ref{propA}. Then $\mathcal{W}'$ is a Whitney stratification of $X$ with respect to which $Rk_*(\sF)$ is constructible. Since the vanishing cycle complex ${^p\varphi}_{\widetilde{f}-c_0}(Rk_*(\sF))$ has support contained in the stratified critical locus of $\widetilde{f}$, using the second statement of Proposition \ref{propA} it follows that this support is contained in $k(\C^n)$, i.e., it coincides with the support of ${^p}\varphi_{f-c_0}(\sF)$. On the other hand, since $f$ is general,  the support of ${^p}\varphi_{f-c_0}(\sF)$ consists of finitely many points in $k(\C^n)\cong \C^n$, thus proving the assertion for the vanishing cycles of $Rk_*(\sF)$. 
The claim about ${^p\varphi}_{\widetilde{f}-c_0}(k_!(\sF))$ follows similarly, or by using Verdier duality.
\end{proof}

We next prove the following result, which is used in Proposition \ref{prop_surj}.

\begin{prop}\label{near} Let $f: \C^n\to \C$ be a general linear function, and let $c_0$ be a fixed point in $\C$. Then for any $\K$-constructible complex $\sF \in D^b_c(\C^n)$ there is a natural isomorphism
\begin{equation}\label{id2n}
H^{-1}(f^{-1}(c); \sF\vert_{f^{-1}(c)}) \cong H^{0}(f^{-1}(c_0); {^p}\psi_{f-c_0}(\sF)),
\end{equation}
for $c\in \C$ sufficiently close to $c_0$. 
\end{prop}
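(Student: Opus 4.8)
The plan is to deduce \eqref{id2n} from the properness of the fiberwise compactification $\widetilde f=\pi_X\colon X\to\C$ of the appendix, by comparing nearby cycles with nearby‑fibre cohomology upstairs on $X$ and then transporting the comparison down to $\C^n$ along the open embedding $k\colon\C^n=X\setminus E\hookrightarrow X$. Fix $c_0$, take $c\neq c_0$ sufficiently close to $c_0$, and write $X_t:=\widetilde f^{-1}(t)$, $E_t:=E\cap X_t$, with $\beta\colon E\hookrightarrow X$, $\beta_t\colon E_t\hookrightarrow X_t$, $\iota_t\colon E_t\hookrightarrow E$, $i_t\colon X_t\hookrightarrow X$ and $k_t\colon f^{-1}(t)=X_t\setminus E_t\hookrightarrow X_t$ the evident inclusions. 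Set $\sG:=Rk_*\sF\in D^b_c(X)$; by the construction in the appendix, $\sG$ is constructible with respect to the Whitney stratification $\mathcal W'$ of Proposition \ref{propA}, the variety $X$ is smooth, and $\widetilde f$ is submersive along $E$, so by Proposition \ref{propA}(2) (together with Whitney's condition (a)) the fibre $X_{c_0}$ is, near $E_{c_0}$, a smooth hypersurface transverse to every stratum of $\mathcal W'$; the same is true of $X_c$ everywhere once $c$ is generic. We will use repeatedly that $\beta^!\sG=0$: indeed the unit $\sG\to Rk_*k^*\sG$ of the recollement triangle $\beta_*\beta^!\sG\to\sG\to Rk_*k^*\sG\xrightarrow{+1}$ is an isomorphism (because $k$ is open and $\sG=Rk_*\sF$), whence $\beta_*\beta^!\sG=0$.

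Since $\widetilde f$ is proper, nearby cycles commute with $R\widetilde f_*$, which yields a natural isomorphism $R\Gamma(X_{c_0};\psi_{\widetilde f-c_0}\sG)\cong R\Gamma(X_c;\sG|_{X_c})$. I would then identify the right‑hand side with $R\Gamma(f^{-1}(c);\sF|_{f^{-1}(c)})$: for $c$ generic, $X_c$ is a smooth hypersurface transverse to $\mathcal W'$, so $i_c^!\sG\cong i_c^*\sG[-2]$, and since $i_c\circ\beta_c=\beta\circ\iota_c$ we get
\[
\beta_c^!\bigl(\sG|_{X_c}\bigr)\;\cong\;\bigl(\beta_c^!i_c^!\sG\bigr)[2]\;\cong\;\bigl(\iota_c^!\beta^!\sG\bigr)[2]\;=\;0 .
\]
Hence, by the recollement triangle on $X_c$, $\sG|_{X_c}\cong R(k_c)_*\bigl(\sF|_{f^{-1}(c)}\bigr)$, and taking $R\Gamma$ gives $R\Gamma(X_c;\sG|_{X_c})\cong R\Gamma(f^{-1}(c);\sF|_{f^{-1}(c)})$.

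The heart of the argument is the companion identification $R\Gamma(X_{c_0};\psi_{\widetilde f-c_0}\sG)\cong R\Gamma(f^{-1}(c_0);\psi_{f-c_0}\sF)$, which I would deduce from the vanishing $\beta_{c_0}^!\bigl(\psi_{\widetilde f-c_0}\sG\bigr)=0$. Applying $\beta_{c_0}^!$ to the specialization triangle $i_{c_0}^*\sG\to\psi_{\widetilde f-c_0}\sG\to\varphi_{\widetilde f-c_0}\sG\xrightarrow{+1}$ on $X_{c_0}$, and invoking Corollary \ref{cor33} — which asserts that $\varphi_{\widetilde f-c_0}\sG$ is supported on a finite subset of $k(\C^n)$, in particular disjoint from $E_{c_0}$, so that $\beta_{c_0}^!\varphi_{\widetilde f-c_0}\sG=0$ — reduces the claim to $\beta_{c_0}^!i_{c_0}^*\sG=0$. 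But near $E_{c_0}$ the fibre $X_{c_0}$ is a smooth hypersurface transverse to $\mathcal W'$ (Proposition \ref{propA}(2)), so there $i_{c_0}^!\sG\cong i_{c_0}^*\sG[-2]$, and exactly as in the previous paragraph $\beta_{c_0}^!i_{c_0}^*\sG\cong\bigl(\iota_{c_0}^!\beta^!\sG\bigr)[2]=0$. Consequently $\psi_{\widetilde f-c_0}\sG\cong R(k_{c_0})_*\bigl(k_{c_0}^*\psi_{\widetilde f-c_0}\sG\bigr)$; since nearby cycles commute with restriction along the open embedding $k$, one has $k_{c_0}^*\psi_{\widetilde f-c_0}\sG\cong\psi_{f-c_0}\sF$, and taking $R\Gamma$ completes this step.

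Chaining the three isomorphisms yields a natural isomorphism $R\Gamma\bigl(f^{-1}(c);\sF|_{f^{-1}(c)}\bigr)\cong R\Gamma\bigl(f^{-1}(c_0);\psi_{f-c_0}\sF\bigr)$ (in fact in every degree); passing to $H^{-1}$ and recalling ${}^p\psi_{f-c_0}\sF=\psi_{f-c_0}\sF[-1]$, so that $H^0\bigl(f^{-1}(c_0);{}^p\psi_{f-c_0}\sF\bigr)=H^{-1}\bigl(f^{-1}(c_0);\psi_{f-c_0}\sF\bigr)$, one obtains \eqref{id2n}. I expect the only real difficulty to be the vanishing $\beta_{c_0}^!(\psi_{\widetilde f-c_0}\sG)=0$: this is exactly the point at which the failure of $f$ itself to be proper is circumvented, and it rests essentially on the two appendix inputs — the absence of critical points of $\widetilde f$ along $E$ (Proposition \ref{propA}(2)) and the finiteness of the support of $\varphi_{\widetilde f-c_0}\sG$ away from $E$ (Corollary \ref{cor33}) — while all the remaining steps are formal manipulations with adjunction triangles and base change.
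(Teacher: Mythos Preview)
Your argument is correct and follows the same overall strategy as the paper: compactify $f$ to the proper map $\widetilde f\colon X\to\C$, use properness to identify the hypercohomology of nearby cycles upstairs with the cohomology of a nearby fibre, and then invoke Proposition~\ref{propA} and Corollary~\ref{cor33} to transport this identification down along the open embedding $k$. The only real difference lies in the execution of the key step on the special fibre. The paper passes via Verdier duality to $k_!\sF$, where the trivial vanishing $\ell^{-1}k_!\sF=0$ (with $\ell\colon E\hookrightarrow X$, your $\beta$) makes the comparison ${}^p\psi_{\widetilde f-c_0}(k_!\sF)\cong(k_{c_0})_!\,{}^p\psi_{f-c_0}\sF$ immediate once Corollary~\ref{cor33} controls the vanishing cycles. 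You instead stay with $\sG=Rk_*\sF$, use the dual vanishing $\beta^!\sG=0$, and appeal to the purity isomorphism $i_{c_0}^!\sG\cong i_{c_0}^*\sG[-2]$ (valid near $E_{c_0}$ by the transversality in Proposition~\ref{propA}(2)) to obtain $\beta_{c_0}^!(i_{c_0}^*\sG)=0$ directly. Likewise, your treatment of the generic fibre via $\beta_c^!(\sG|_{X_c})=0$ is the dual counterpart of the paper's smooth base change~\eqref{bc}. The two routes are essentially Verdier dual to one another: yours avoids the duality step at the cost of invoking the transversality/purity isomorphism explicitly, while the paper trades that for a one-line reduction to $k_!$.
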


\begin{proof}
The isomorphism in \eqref{id2n} would be true if $f$ were a proper map. Since this is not the case, we need to work with a proper extension $\widetilde{f}$ of $f$, and use the fact proved above that $f$ does not have ``singularities at infinity''. It suffices  to assume that $c_0$ is a stratified critical value of $f$,
the statement being obvious otherwise.

Let $X\subset \C \mathbb{P}^n \times \C$ be as before the standard partial compactification of the graph of $f$, with projection $\widetilde{f}\colon X \to \C$.  Then $\widetilde{f}$ is a proper extension of $f$, whose fibers are the projective closures of the fibers of $f$. Let $k:\C^n \hookrightarrow X$ be the inclusion map. Consider the cartesian diagram
\[
\xymatrix{
f^{-1}(c) \ar[d]_{k_c}\ar[r]^{i_c}&\C^n \ar[d]^{k}\\
\widetilde{f}^{-1}(c) \ar[r]_{{\widetilde i}_c}& X.
}
\]
Since $c$ is a regular value of $f$, there is a base change isomorphism (e.g., see  \cite[Proposition 4.3.1, Remark 4.3.6]{Sch})
\begin{equation}\label{bc}
{{\widetilde i}_c}^{-1}Rk_*\cong R{k_c}_*i_c^{-1}.
\end{equation}

We then have the following sequence of natural isomorphisms:
\begin{equation}\label{id3}
\begin{split}
H^0\big({\widetilde{f}}^{-1}(c_0); {^p}\psi_{\widetilde{f}-c_0}(Rk_*\sF)\big) &\cong H^{-1}\big({\widetilde{f}}^{-1}(c); {{\widetilde i}_c}^{-1}Rk_*\sF\big) \\
& \overset{\eqref{bc}}{\cong} H^{-1}\big({\widetilde{f}}^{-1}(c); R{k_c}_*i_c^{-1}\sF\big) \\
& \cong H^{-1}\big(f^{-1}(c); i_c^{-1}\sF\big),
\end{split}
\end{equation}
where the first isomorphism follows from the properness of $\widetilde{f}$ and \cite[Example 10.4.20]{MS}.

To complete the proof of \eqref{id2n}, in view of \eqref{id3}  it suffices to show that there is a natural isomorphism
\begin{equation}\label{id3h}
H^0\big(\widetilde{f}^{-1}(c_0); {^p}\psi_{\widetilde{f}-c_0}(Rk_*\sF)\big) \cong H^{0}\big(f^{-1}(c_0); {^p}\psi_{f-c_0}(\sF)\big).
\end{equation}
In fact, if $k_{c_0}\colon f^{-1}(c_0) \hookrightarrow \widetilde{f}^{-1}(c_0)$ is the inclusion map induced by $k$, there is a natural isomorphism
\begin{equation}\label{id4h}
H^{0}\big(f^{-1}(c_0); {^p}\psi_{f-c_0}(\sF)\big) \cong H^0\big(\widetilde{f}^{-1}(c_0); R{k_{c_0}}_*{^p}\psi_{f-c_0}(\sF)\big). 
\end{equation}
Moreover, by \cite[Remark 4.3.7 (3)]{Sch}, there is a base change morphism 
\begin{equation}\label{bc3} {^p}\psi_{\widetilde{f}-c_0}\circ Rk_*\to R{k_{c_0}}_*\circ {^p}\psi_{f-c_0}\end{equation}
 inducing upon taking cohomology the natural map 
\begin{equation}\label{eq+3}
H^0\big(\widetilde{f}^{-1}(c_0); {^p}\psi_{\widetilde{f}-c_0}(Rk_*\sF)\big)  \to H^0\big(\widetilde{f}^{-1}(c_0); R{k_{c_0}}_*{^p}\psi_{f-c_0}(\sF)\big),
\end{equation}
which we will prove to be an isomorphism. 
For this, it thus suffices to show that the base change morphism \eqref{bc3} induces 
the following isomorphism in $D^b_c(\widetilde{f}^{-1}(c_0))$:
\begin{equation}\label{id4}
{^p}\psi_{\widetilde{f}-c_0}(Rk_*\sF) \cong R{k_{c_0}}_*{^p}\psi_{f-c_0}(\sF).
\end{equation}
After replacing $\sF$ by its Verdier dual, and using the fact that the perverse nearby cycle functor commutes with Verdier duality, it suffices to show that
\begin{equation}\label{id5}
{^p}\psi_{\widetilde{f}-c_0}(k_!\sF) \cong {k_{c_0}}_!{^p}\psi_{f-c_0}(\sF).
\end{equation}

Let $E:=X \setminus \C^n$ and let $\ell: E \hookrightarrow X$ denote the closed embedding. Since $f$ is a general linear function, it follows from the above Corollary \ref{cor33} that  the support of the (perverse) vanishing cycles of $\widetilde{f}$ is disjoint from $E$. Hence, we have for the induced closed inclusion $\ell_{c_0}:E \cap \widetilde{f}^{-1}(c_0) \hookrightarrow \widetilde{f}^{-1}(c_0)$ that
\begin{equation}\label{id6}
\ell_{c_0}^{-1}\big( {^p}\varphi_{\widetilde{f}-c_0}(k_!\sF)\big) = 0.
\end{equation}
Therefore, we have
\begin{equation}\label{id4b}
{^p}\varphi_{\widetilde{f}-c_0}(k_!\sF) \cong {k_{c_0}}_!k_{c_0}^{-1}\big({^p}\varphi_{\widetilde{f}-c_0}(k_!\sF)\big)\cong {k_{c_0}}_!\big({^p}\varphi_{f-c_0}(\sF)\big),
\end{equation}
where the first isomorphism follows from \eqref{id6} by using the attaching triangle for $(k_{c_0},\ell_{c_0})$, while the second isomorphism uses the fact that vanishing cycles commute with open embeddings (cf. also \cite[Proposition 10.4.19(2)]{MS}).

Consider next the distinguished triangles
\[
{k_{c_0}}_!\big({^p}\psi_{f-c_0}(\sF)\big)\to {k_{c_0}}_!\big({^p}\varphi_{f-c_0}(\sF)\big)\to {k_{c_0}}_! i_{c_0}^{-1}(\sF) \xrightarrow{+1}
\]
and
\[
{^p}\psi_{\widetilde{f}-c_0}(k_!\sF)\to {^p}\varphi_{\widetilde{f}-c_0}(k_!\sF)\to \widetilde{i}_{c_0}^{-1}(k_!\sF) \xrightarrow{+1},
\]
with $i_{c_0}:f^{-1}(c_0) \hookrightarrow \C^n$ and ${\widetilde i}_{c_0}:\widetilde{f}^{-1}(c_0) \hookrightarrow X$ the inclusion maps.
Note that there are natural base change morphisms connecting the terms of the first triangle to those of the second, respectively. 
By \eqref{id4b}, it follows  that to show \eqref{id5} it suffices to prove the isomorphism
\begin{equation}\label{id7}
{{\widetilde i}_{c_0}}^{-1}k_!\sF\cong {k_{c_0}}_!i_{c_0}^{-1}\sF.
\end{equation}
This is immediate, either by using directly the base change formula (e.g., see \cite[Chapter V, Proposition 10.7]{Bo}), or by noting that, if ${\widetilde i}^E_{c_0}: E \cap \widetilde{f}^{-1}(c_0) \hookrightarrow E$ denotes  the inclusion map, then
\[\ell_{c_0}^{-1} {{\widetilde i}_{c_0}}^{-1}k_!\sF = ({\widetilde i}^E_{c_0})^{-1}\ell^{-1} k_!\sF=0,\]
whence, using the attaching triangle for the pair  $(k_{c_0},\ell_{c_0})$ yields 
\[{{\widetilde i}_{c_0}}^{-1}k_!\sF\cong  {k_{c_0}}_! k_{c_0}^{-1} {{\widetilde i}_{c_0}}^{-1}k_!\sF \cong {k_{c_0}}_! i_{c_0}^{-1} k^{-1} k_!\sF\cong {k_{c_0}}_!i_{c_0}^{-1}\sF.\]
This completes the proof of \eqref{id4} and of the Proposition.
\end{proof}


\begin{thebibliography}{ADMSP}


\bibitem[Bor08]{Bo} A. Borel, {\it Intersection cohomology}, Modern Birkh\"auser Classics. Birkh\"auser Boston, Inc., Boston, MA, 2008.

\bibitem[Coh93]{Coh93} D. C. Cohen, {\it Cohomology and intersection cohomology of complex hyperplane arrangements.}  Adv. Math. 97 (1993), no. 2, 231-266.

\bibitem[Coh98]{Coh98} D. C. Cohen, {\it Morse inequalities for arrangements.} Adv. Math. 134 (1998), no. 1, 43-45.


\bibitem[Dim92]{Dim92} A. Dimca, {\it Singularities and topology of hypersurfaces}, Universitext. Springer-Verlag, New York, 1992.

\bibitem[Dim04]{Dim04} A. Dimca, {\it Sheaves in Topology},
Universitext, Springer-Verlag, Berlin, 2004.

\bibitem[DP03]{DP} A. Dimca, S. Papadima, {\it Hypersurface complements, Milnor fibers and higher homotopy groups of arrangments}, Ann. of Math. (2) 158 (2003), no. 2, 473-507.


\bibitem[LY25]{LY} Y. Liu,  M. Yoshinaga, {\it Maximal Betti number for local system cohomology of hyperplane arrangement complements}, arXiv:2503.23976.


\bibitem[MS22]{MS} L. Maxim, J. Sch\"urmann, {\it Constructible sheaf complexes in complex geometry and applications}, in ``Handbook of geometry and topology of singularities III'', 679--791, Springer, Cham, 2022.

\bibitem[OT92]{OT} P. Orlik, M. Terao, {\it Arrangements of hyperplanes}, Vol. 300, Springer-Verlag, 1992.


\bibitem[PS19]{PS19} S. Papadima, A. I. Suciu, {\it Naturality properties and comparison results for topological and infinitesimal embedded jump loci.} Adv. Math. 350 (2019), 256-303.

\bibitem[Ra02]{R} R. Randell, {\it Morse theory, Milnor fibers and minimality of hyperplane arrangements},  Proc. Amer. Math. Soc. 130 (2002), 2737--2743.

\bibitem[Sch03]{Sch} J. Sch\"urmann, {\it Topology of singular spaces and constructible sheaves}, Mathematical Monographs 63. Birkh\"auser Verlag, Basel, 2003.

\bibitem[Tib07]{Tib} M. Tib\u{a}r, {\it Polynomials and vanishing cycles}, Cambridge Tracts in Mathematics, 170. Cambridge University Press, Cambridge, 2007

\end{thebibliography}
\end{document}